\title[Explicit KSBA compactifications for secondary Burniat surfaces]%
{Explicit KSBA compactifications of moduli spaces\\
  of secondary and tertiary Burniat surfaces}
\date{May 12, 2024}  
\author{Valery Alexeev} 
\email{valery@uga.edu}
\address{Department of Mathematics, University of Georgia, Athens GA 30602, USA}
\author{Xiaoyan Hu}
\email{xiaoyan.mga@gmail.com} 
\address{Unaffiliated} 
\begin{document}
\maketitle

\begin{abstract}
  We describe explicitly the geometric KSBA compactifications, obtained
  by adding slc surfaces~$X$ with ample canonical class,
  of moduli spaces of Burniat surfaces of degrees $K^2=5$,
  $4$ and $3$. 
\end{abstract}

\setcounter{tocdepth}{1}

\tableofcontents

\section{Introduction}
\label{sec:intro}

Burniat surfaces of degree $3\le d=K_X^2\le 6$ are surfaces of general
type which can be obtained as $\bZ_2^2$-covers of degree~$d$ del Pezzo
surfaces, branched over the preimages of some very special line
configurations in $\bP^2$. The primary Burniat surfaces are those of
degree $6$, they are $\bZ_2^2$-covers of the Cremona surface
$\Sigma= \Bl_3\bP^2$. The secondary (resp. tertiary) Burniat surfaces
have degrees $5$, $4$ (resp. $3$). They are obtained by making further
blowups of special configurations where some additional triples of
lines are incident. See Section~\ref{sec:burniat-surfaces} for precise
definitions.

There are four types of secondary and tertiary Burniat surfaces, two
generically smooth: in degree $5$ and in degree $4$ non-nodal type
(which we denote $4a$), and two generically nodal: in degree~$4$ nodal
type (denoted $4b$) and in degree $3$. In degree~$d$ the moduli space
$M\ubur_d$ has dimension $d-2$.  In the two smooth types it is an
irreducible component of the moduli space of surfaces of general
type. In the two nodal types it is an irreducible closed subset in a
component of higher dimension, see \cite{bauer2010burniat-II,
  bauer2014burniat-II-erratum, bauer2013burniat-III}.

In this paper for each case $d=5,4a,4b,3$ we explicitly describe the
KSBA compactification $\oM\ubur_d$ of $M\ubur_d$, which we define as
the normalization of the closure of $M\ubur_d$ in the KSBA space
$\oM\uslc$ of stable surfaces with slc singularities and ample
canonical class, taken with the reduced scheme structures. Our main
reference for the KSBA moduli space $\oM\uslc$ is
\cite{kollar2023families-of-varieties}.  We do this as an application
of the paper \cite{alexeev2024explicit-compactifications} in which the
first author and Rita Pardini solved the same problem for the primary
Burniat surfaces. An earlier version of this work, with less explicit
descriptions and done by a different method, formed a PhD thesis of
the second author \cite{hu14compactifications-of-moduli} directed by
the first author.

Our main results are as follows.  In these theorems, by the boundary
of the compactification we mean the closed subset parameterizing
surfaces with worse than canonical singularities. The origin of a
toric variety $V$ is the point $1\in (\bC^*)^r \subset V$.

\begin{theorem}\label{thm-intro:3}
  The KSBA compactification $\oM\ubur_{3}$ is an $S_2$-quotient of $\bP^1$
  (and is isomorphic to $\bP^1$).
  The boundary $\oM\ubur_3\setminus M\ubur_3$
  consists of two divisors.% of types C, E.
  The underlying moduli stack is a $\bZ_2^2$-gerbe over the quotient
  stack $[\bP^1: (C_3\times S_2)]$.
\end{theorem}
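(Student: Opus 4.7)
The plan is to adapt the strategy from \cite{alexeev2024explicit-compactifications} used in the primary Burniat case to the degree $3$ setting. Starting from the description in Section~\ref{sec:burniat-surfaces}, I would first parameterize generic tertiary Burniat surfaces by the line configurations on $\bP^2$ satisfying the additional incidence constraints singling out degree $d=3$; since $\dim M\ubur_3 = d-2 = 1$, the moduli of such configurations modulo projective equivalence reduces to a single cross-ratio-like parameter, giving a dominant rational map from a $\bP^1$ onto $\oM\ubur_3$.

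Next, I would identify the combinatorial automorphism group acting on this parameter $\bP^1$. The factor $S_2$ should reflect a symmetry that swaps two ``branches'' of the configuration and therefore acts non-trivially on the cross-ratio, while the factor $C_3$ should arise from a cyclic permutation of a distinguished triple (three blown-up points on the del Pezzo, or three vertices of the Burniat triangle) that acts trivially on the remaining modulus but produces a generic stabilizer. This simultaneously explains why the coarse space is the $S_2$-quotient $\bP^1/S_2\cong\bP^1$ and why the stack quotient $[\bP^1:(C_3\times S_2)]$ carries a generic $C_3$-inertia. The $\bZ_2^2$-gerbe structure on top is forced by the fact that the deck transformations of the defining $\bZ_2^2$-Galois cover act trivially on the del Pezzo base, hence lie in the automorphism group of every tertiary Burniat surface.

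For the boundary, I would combine the classification of degenerate configurations (lines colliding or acquiring an extra concurrence) with the KSBA limit recipe of \cite{alexeev2024explicit-compactifications}. Each degenerate configuration yields a degenerate $\bZ_2^2$-cover whose stable model is the slc limit surface; computing these explicitly should exhibit exactly two $S_2$-orbits of boundary points on $\bP^1$ where the stable limit has strictly worse than canonical singularities, accounting for the two boundary divisors of $\oM\ubur_3$. The other special orbits are expected to lie inside $M\ubur_3$ itself and merely contribute additional stacky automorphisms.

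The main obstacle, as in the primary case, will be the explicit computation of the KSBA stable limits at the boundary: one must take the family of $\bZ_2^2$-covers over a punctured neighborhood of each critical parameter, run a semistable reduction and MMP argument on both the cover and its quotient, and verify that the central fiber is slc with ample canonical class. The combinatorial bookkeeping of how the tertiary line configuration breaks at these loci --- especially where several lines simultaneously collide or three of them become concurrent --- is the most delicate step, but the template from \cite{alexeev2024explicit-compactifications} should dictate what to expect for each degeneration type.
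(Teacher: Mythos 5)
Your overall picture --- a one-parameter family, an $S_2$ acting by inversion on the modulus, a generic $C_3$-stabilizer coming from a cyclic symmetry, the $\bZ_2^2$-gerbe from the deck group, two boundary orbits --- matches the shape of the paper's answer, but the proof you outline has a genuine gap exactly where you flag it: the boundary analysis. You propose to compactify a cross-ratio-type parameter space and then, at each degenerate configuration, run semistable reduction and MMP on a one-parameter family of $\bZ_2^2$-covers to produce the slc limits. This is not carried out, and without it you cannot (a) verify that the resulting limits are stable and identify them, (b) identify your $\bP^1/S_2$ with the normalization of the closure of $M\ubur_3$ in $\oM\uslc$ (which is how $\oM\ubur_3$ is defined, so one needs an actual family of stable pairs over all of $\bP^1$ inducing a finite classifying map that is an isomorphism over the interior), or (c) count the boundary divisors. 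Asserting that the computation ``should exhibit exactly two orbits'' is a prediction, not a proof.

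The paper avoids all of this by a reduction that your proposal misses: the degree-$3$ space is realized inside the already-compactified degree-$6$ space as $Z_3 = H_{211}\cap H_{121}\cap H_{112}\subset\oM_6$, the intersection of three type H divisors. One computes $Z_3=Z_3\tor=\bP^1$ from the fan $\fF_6$ (with $0,\infty$ of type C and $1$ of type E), restricts the universal family $\cY_6\to\oM_6$ to $Z_3$, blows up the three sections of incidence points $P_{211},P_{121},P_{112}$ --- Lemma~\ref{lem:n-pts} guarantees these are distinct smooth points on every fiber --- and contracts by the relatively big and nef doubled log canonical divisor to the relative canonical model. All stable reduction was already done in degree $6$ in \cite{alexeev2024explicit-compactifications}, so the fiberwise verification of slc-ness and ampleness reduces to a finite check against the list of degree-$6$ stable surfaces (Figure~\ref{fig-deg6}) combined with the blowup-and-contraction analysis of Section~\ref{sec:evolution}. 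The group-theoretic part of your sketch is essentially right ($\Gamma_3=C_3\times S_2$ with $C_3$ the color rotation acting trivially on the coordinate $r_1/r_2=g_1/g_2=b_1/b_2$ and $S_2$ the Cremona involution acting by inversion), but to turn your plan into a proof you would either need to supply the one-parameter stable reductions at $0,1,\infty$ and the properness argument yourself, or adopt the embedding into $\oM_6$, which is what makes both steps essentially free.
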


\begin{theorem}\label{thm-intro:4a}
  In the non-nodal degree $4$ case, the KSBA compactification
  $\oM\ubur\foura$ is a $C_3\times S_2$-quotient of the smooth toric
  Cremona surface $\Sigma=\Bl_3\bP^2$.  The boundary
  $\oM\ubur\foura\setminus M\ubur\foura$ consists of one divisor and
  one point.  The underlying moduli stack is a $\bZ_2^2$-gerbe over
  the quotient stack $[\Sigma: (C_3\times S_2^2)]$.
\end{theorem}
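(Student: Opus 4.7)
The plan is to realize $\oM\ubur\foura$ as an explicit quotient of the smooth toric Cremona surface $\Sigma = \Bl_3\bP^2$, imitating for type~$4a$ line configurations the strategy carried out for primary Burniat surfaces in \cite{alexeev2024explicit-compactifications}.

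First, I would identify the coarse parameter space of type~$4a$ configurations with~$\Sigma$. A non-nodal degree~$4$ Burniat surface is a $\bZ_2^2$-cover branched along the preimage, in a suitable del Pezzo of degree~$4$, of a line configuration in~$\bP^2$ with exactly one additional triple incidence beyond the primary Burniat data. Modulo projective equivalence these configurations form a two-dimensional family whose natural compactification admits a toric structure isomorphic to~$\Sigma$: the open torus $(\bC^*)^2$ parametrizes honest type~$4a$ configurations, while the toric boundary corresponds to further degenerations. The combinatorial symmetry group $C_3\times S_2^2$ acts toric-equivariantly, by permuting the three Cremona lines and flipping sides of incidences, and one of the $S_2$ factors acts trivially on isomorphism classes of Burniat covers, so at the level of the coarse space only $C_3\times S_2$ acts effectively.

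Next, I would construct a flat family of KSBA-stable surfaces over $\Sigma$ by specializing the primary Burniat family of \cite{alexeev2024explicit-compactifications}. Over the interior torus this recovers the smooth type~$4a$ Burniat surfaces. Over the toric boundary, the del Pezzo degenerates to a reducible slc surface whose components are read off from the fan of~$\Sigma$, and the $\bZ_2^2$-cover is assembled componentwise from the degenerate branch divisor. Verifying that every fiber is slc with ample canonical class gives a morphism $\Sigma\to\oM\uslc$, and this morphism is $C_3\times S_2^2$-invariant by construction. Birationality on the interior together with a dimension count then shows that the induced map from $\Sigma/(C_3\times S_2)$ to the normalization $\oM\ubur\foura$ is an isomorphism. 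The $\bZ_2^2$-gerbe structure on the moduli stack records the Galois $\bZ_2^2$-action on every cover as generic inertia, while the trivially-acting extra $S_2$ factor accounts for the enlargement from the coarse group $C_3\times S_2$ to $C_3\times S_2^2$ in the stack quotient.

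Finally, to identify the boundary I would classify the points of $\Sigma$ whose corresponding surface has worse than canonical singularities. The toric boundary is a hexagonal cycle of rational curves, and the $C_3\times S_2$-action collapses it to a single irreducible divisor in the quotient. The additional isolated boundary point is the image of the origin $1\in(\bC^*)^2\subset\Sigma$, which is fixed by the entire acting group and corresponds to the uniquely most symmetric configuration, whose $\bZ_2^2$-cover acquires a non-canonical singularity at the symmetric triple incidence. I expect the main technical obstacle to be this last step: the explicit local analysis at the origin needed to confirm it lies in the non-canonical locus, and the verification that no other interior point of $\Sigma$ does, together with checking slc stability of all the degenerate fibers constructed over the toric boundary.
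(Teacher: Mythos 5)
Your outline reproduces the global picture of the paper's proof --- a toric $\Sigma$ carrying a $C_3\times S_2^2$-action with one $S_2$ factor acting trivially, one boundary divisor from the toric boundary and one boundary point at the origin --- but the two steps you defer as ``technical obstacles'' are exactly where the content lies, and as written your route has real gaps. You assert that the compactified configuration space ``admits a toric structure isomorphic to $\Sigma$'' and that the degenerate fibers ``are read off from the fan,'' but nothing in your setup produces that fan or that list of fibers. The paper obtains both by realizing the degree $4a$ locus as $Z\foura=H_{111}\cap H_{222}\subset\oM_6$ inside the already-constructed degree-$6$ moduli space, so the toric model is forced to be the closure of the subtorus $T\foura=\{r_1g_1b_1=r_2g_2b_2=1\}$ with fan $\fF_6\cap N\foura$; verifying that this fan has six type-D rays and is the fan of the Cremona surface, and obtaining the degenerate fibers by restricting the known degree-$6$ family, blowing up the two sections $P_{111},P_{222}$ and passing to the relative canonical model, is the proof. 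Without this embedding (or an equivalent stability analysis), ``natural compactification'' is a name, not a construction.

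More substantively, the moduli space of labeled pairs is not $\Sigma$ but $\Bl_1\Sigma$: the strict preimage of $Z\foura\tor$ under the blowup $\rho_1$ of $\oM_6$ at the origin carries an exceptional divisor of type E, over which the stable limit is a \emph{reducible} slc surface (two components of type $\hash5(2)$ glued along a curve), not an irreducible surface acquiring a non-canonical point at a triple incidence as you describe. One returns to $\Sigma$ only after checking that all labeled pairs over this E divisor are isomorphic, so that the family descends along the contraction $\Bl_1\Sigma\to\Sigma$; this is precisely why the boundary contains an isolated point rather than a second divisor. A direct construction over $\Sigma$, as you propose, must prove this independence of the stable limit on the direction of approach to the origin, which is the same check in disguise. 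Finally, the stabilizer of the configuration in the relabeling group is actually $\wt\Gamma\foura=\Gamma\foura\rtimes S_2$, the extra involution coming from the second presentation of $Y\foura$ as $\Bl_2\Sigma$; the paper shows this $\sigma$ only changes coordinates on $\oM\foura$ and does not enlarge the stack quotient, a point your argument would also need to address before concluding that the stack is $[\Sigma:(C_3\times S_2^2)]$.
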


\begin{theorem}\label{thm-intro:4b}
  In the nodal degree $4$ case, the KSBA compactification $\oM\ubur\fourb$ is an
  $S_2^2$-quotient of the surface $\Bl_5(\bP^1\times\bP^1)$, 
  the blowup of the toric surface $\bP^1\times\bP^1$ at the four fixed
  points and at the origin.
  The boundary
  $\oM\ubur\fourb\setminus M\ubur\fourb$ consists of five
  divisors. % of types B, C, D, E, G, H, but not G.
  The underlying moduli stack is a $\bZ_2^2$-gerbe over the quotient
  stack $[\Bl_5(\bP^1\times\bP^1) : S_2^2]$.
\end{theorem}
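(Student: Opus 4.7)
The plan is to follow the strategy used for primary Burniat surfaces in \cite{alexeev2024explicit-compactifications}, adapted to the nodal degree-$4$ setting using the foundational setup recalled in Section~\ref{sec:burniat-surfaces}. A $4b$ Burniat surface $X$ is a $\bZ_2^2$-Galois cover of a suitable (weak) del Pezzo surface of degree~$4$, branched over a configuration $\mathcal{D}$ of curves subject to a specific set of incidences that includes the extra triple point forcing the $(-2)$-curve responsible for the node. Hence the moduli of $X$ reduces to the moduli of $\mathcal{D}$ modulo its symmetries, and the proof splits into three tasks: (a) parametrize these configurations, (b) compute the KSBA stable limits along each boundary stratum, and (c) match the result with the claimed quotient $[\Bl_5(\bP^1\times\bP^1):S_2^2]$.

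For (a), we would fix the Cremona symmetry in the toric model and introduce two natural cross-ratio coordinates for the two remaining free parameters of a $4b$-configuration; this identifies the open part of $M\ubur\fourb$ with an $S_2^2$-quotient of a two-dimensional torus $(\bC^*)^2$. The toric completion $\bP^1\times\bP^1$ is then the natural target for the one-parameter degenerations in which some line moves to a toric boundary position. The two commuting involutions generating $S_2^2$ should arise from exchanges of specific pairs of lines in the configuration that preserve the isomorphism class of the cover; a direct inspection should confirm that they generate the full stabilizer.

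The main technical step, (b), is to compute the KSBA stable limit of $X$ as the parameters approach each boundary stratum of $\bP^1\times\bP^1$. Using the explicit $\bZ_2^2$-cover description, each such degeneration of $\mathcal{D}$ lifts through the cover construction to a family of slc surfaces whose central fiber can be read off after taking the stable reduction. By analogy with the primary case treated in \cite{alexeev2024explicit-compactifications}, the expectation is that along each of the four toric boundary divisors of $\bP^1\times\bP^1$ the family is already stable, but that at each of the four toric fixed points and at the origin $(1,1)$ two distinct KSBA limits both specialize to the same point in the torus, so the moduli map extends only to a rational map and must be resolved by blowing up those five points. After these blowups the induced map $\Bl_5(\bP^1\times\bP^1) \to \oM\uslc$ should extend to a morphism, and we would verify via the valuative criterion along each of the resulting nine boundary divisors that no further blowups are needed. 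This verification --- showing that each exceptional stratum parametrizes a new isomorphism class of stable slc surface --- is the hardest and most delicate part of the argument.

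Finally, for (c), we would pass to the quotient $[\Bl_5(\bP^1\times\bP^1):S_2^2]$, group the boundary divisors upstairs into $S_2^2$-orbits, and check that the image of the boundary in $\oM\ubur\fourb$ consists of exactly five irreducible divisors. The $\bZ_2^2$-gerbe structure on the underlying moduli stack is then a formal consequence of the cover construction: the Galois group $\bZ_2^2$ of $X\to Y$ sits inside $\mathrm{Aut}(X)$ and acts trivially on the branch pair $(Y,\mathcal{D})$ and hence on the coarse moduli, so the moduli stack of $4b$ Burniat surfaces is a $\bZ_2^2$-gerbe over the moduli stack of pairs $(Y,\mathcal{D})$, which the preceding analysis identifies with $[\Bl_5(\bP^1\times\bP^1):S_2^2]$.
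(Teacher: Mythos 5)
Your route is genuinely different from the paper's: you propose to build the moduli space from scratch by parametrizing branch configurations on a degree-$4$ del Pezzo and running stable reduction along each degeneration, whereas the paper realizes the parameter space as the closed subset $Z\fourb=H_{111}\cap H_{122}$ of two type~H divisors inside the already-constructed compactification $\oM_6$ of primary Burniat pairs. That choice is what makes the paper's proof short: the toric model $Z\fourb\tor\simeq\Bl_4(\bP^1\times\bP^1)$ comes for free as the closure of a subtorus with fan $\fF_6\cap N\fourb$, the extra blowup at the origin is the restriction of $\rho_1$, and the list of degenerate fibers is read off from the degree-$6$ classification together with Lemma~\ref{lem:n-pts}. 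Your guess about which five points must be blown up is correct, but nothing in your proposal justifies it; without inheriting the fan from the primary case you would have to redo the weighted stable reduction that produces it.

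There are two concrete gaps. First, your boundary count comes out wrong: the nine divisors you list (four toric boundary divisors of $\bP^1\times\bP^1$, four exceptional divisors over the fixed points, one over the origin) fall into only \emph{four} $S_2^2$-orbits, of types B, C, D, E. The fifth boundary divisor, of type G, lies in the interior of the torus: it is the locus where two branch curves of the same color coincide ($g_1=g_2$, forcing $b_1=b_2$). There the pair $(Y,\frac12 D)$ does not degenerate at all, but the cover $X$ acquires worse-than-canonical singularities, and the boundary is by definition the locus of non-canonical surfaces, not the complement of the open torus. Second, the step you describe as ``read off after taking the stable reduction'' conceals the one place where the naive construction fails: over the type~F divisor (the closure of the curve where $R_1,G_1,G_2,B_1,B_2$ meet in a point, again a divisor meeting the open torus), the blown-up family has a fiber component isomorphic to $\Bl_2\bP^2$ of volume $1-2=-1$, so the relative log canonical divisor is not nef and one cannot contract to the relative canonical model. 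The paper must first perform an Atiyah flop over this divisor. Without that correction you do not have a family of stable pairs over all of $\Bl_5(\bP^1\times\bP^1)$, hence no classifying morphism to $\oM\uslc$, and the valuative-criterion verification you defer cannot even be formulated.
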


\begin{theorem}\label{thm-intro:5}
  The KSBA compactification $\oM\ubur_5$ is a $C_3\times S_2$-quotient
  of a smooth $3$-dimensional variety $Z_5$ which is the blowup at the
  origin of the smooth projective toric variety $Z_5\tor$ explicitly
  described in Section~\ref{sec:deg5}.  The boundary
  $\oM\ubur_5\setminus~M\ubur_5$ consists of seven
  divisors.% of types B, C, D, E, G, H, H.
  The underlying moduli stack is a $\bZ_2^2$-gerbe over the quotient
  stack $[Z_5: (C_3\times S_2)]$.
\end{theorem}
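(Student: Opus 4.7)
The plan is to mirror, in one dimension higher, the strategy of \cite{alexeev2024explicit-compactifications} for primary Burniat surfaces, while grafting onto it the extra combinatorics created by the additional blown-up point that defines the secondary family in degree $5$. First I would set up the moduli-theoretic framework: every secondary Burniat surface in degree $5$ arises as a $\bZ_2^2$-cover of a fixed del Pezzo $\Sigma_5=\Bl_4\bP^2$ branched along three curves $D_1+D_2+D_3$ coming from a line configuration in $\bP^2$ with the prescribed incidences. After fixing the cover data, the coarse moduli of such configurations up to the automorphisms of $\Sigma_5$ compatible with the $\bZ_2^2$-action will be identified with a three-dimensional open subset of a suitable parameter space; the group $C_3\times S_2$ is precisely the outer automorphism group of $\bZ_2^2$ preserving the cover data, and the $\bZ_2^2$-gerbe structure is automatic since every Burniat surface carries its $\bZ_2^2$-action as part of its moduli.

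Next I would build the toric model $Z_5\tor$ by taking the appropriate one-parameter degenerations of the line configuration and reading off the fan: each ray should correspond to an irreducible component that can bubble off in an slc limit of the $\bZ_2^2$-cover, and the cones record which components can collide simultaneously. This is a direct three-dimensional generalization of the toric pictures used for the degree-$6$ case and for the two-dimensional cases $\oM\ubur_{4a},\oM\ubur_{4b}$ that the paper has just handled, so the combinatorial bookkeeping is constrained on both sides. The blow-up at the origin $1\in(\bC^*)^3\subset Z_5\tor$ should appear because the most symmetric configuration (where all the ``secondary'' coincidences degenerate at once) has a nontrivial stabilizer under $(\bC^*)^3$ and the universal family there is not yet slc; blowing up separates these simultaneous degenerations into a new divisor and makes the universal family stable.

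The heart of the proof is then to match this candidate $[Z_5:(C_3\times S_2)]$ with the normalized closure of $M\ubur_5$ in $\oM\uslc$. I would do this by constructing an explicit family of stable slc surfaces over $Z_5$ whose geometric fibres over the interior recover the smooth secondary Burniat surfaces of degree $5$ and whose fibres over each boundary stratum are computed explicitly from the toric/blowup data; by the universal property of $\oM\uslc$ this gives a morphism $Z_5\to \oM\uslc$ factoring through $\oM\ubur_5$, and $C_3\times S_2$-equivariance together with a dimension/degree count (using that both sides are normal, proper, and three-dimensional with the same generic fibre) shows that the induced morphism $[Z_5:(C_3\times S_2)]\to \oM\ubur_5$ is an isomorphism. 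Enumerating torus-invariant divisors of $Z_5\tor$, the exceptional divisor of $Z_5\to Z_5\tor$, and then collapsing their $C_3\times S_2$-orbits yields the count of seven boundary divisors.

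The main obstacle will be the explicit slc classification of the limiting surfaces at each of the new strata introduced by the origin blowup: one must show that these limits really are slc with ample canonical class, compute their normalizations, double loci and gluing involutions, and check that no further blowup of $Z_5\tor$ is required (i.e.\ that $Z_5$ is already the normalization of the KSBA closure). This amounts to the kind of one-parameter test-curve analysis carried out in \cite{alexeev2024explicit-compactifications}, but the presence of the extra secondary coincidence creates configurations where two components of the branch divisor become tangent simultaneously with a third component passing through the tangency point, and handling this ``triple collision'' is what forces the blowup at the origin and is the delicate step that has to be done by hand.
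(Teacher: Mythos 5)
Your overall strategy is legitimate but genuinely different from the paper's: you propose to rebuild the degree-$5$ moduli space from scratch out of line configurations on $\Bl_4\bP^2$, whereas the paper obtains everything by restriction from the already-solved degree-$6$ case. Concretely, the paper sets $Z_5:=H_{111}\subset\oM_6$, a single type-H divisor; its toric model is computed as the intersection $\fF_5=\fF_6\cap N_5$ of the known fan with a sublattice, the universal family is the restriction of the degree-$6$ family blown up along the section of the points $P_{111}$ and contracted to the relative canonical model, and the relabeling group is the stabilizer $\Gamma_5=\Stab_{\Gamma_6}\{(R_1,G_1,B_1)\}=C_3\times\la\Cr\ra$. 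The from-scratch route could in principle work but forfeits all of this leverage and forces you to redo the degenerate-fiber classification by hand. Note also a misattribution: the $S_2$ factor of $C_3\times S_2$ is generated by the Cremona involution of the base surface, not by an outer automorphism of $\bZ_2^2$; only the $C_3$ of color permutations comes from changing the basis of $\bZ_2^2$.

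The concrete gap is in your enumeration of the boundary. Counting torus-invariant divisors of $Z_5\tor$ plus the exceptional divisor of the origin blowup and passing to $C_3\times S_2$-orbits yields the three orbits B, C, D of the $18$ rays plus the type-E divisor, i.e.\ four boundary divisors, not seven. The missing three are the divisors of type G (two interior curves of the same color coincide, e.g.\ $R_1=R_2$) and the two orbits of type-H divisors $H_{ijk}$ with $(i,j,k)\ne(1,1,1)$: these loci meet the open torus $T_5$, leave the pair $(Y,\tfrac12 D)$ log canonical and the surface $Y$ unchanged, and are therefore invisible both to the fan and to your heuristic that each ray corresponds to a component bubbling off; they belong to the boundary only because the $\bZ_2^2$-cover $X$ acquires non-canonical singularities there. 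Separately, you do not account for the type-F degenerations (five interior curves concurrent), which produce three divisors lying over curves through the origin of $T_5$; on these the stable pair contains an $\bF_1$-component of volume $0$ that is contracted, the fibers become constant along those directions, and the corresponding divisors must be contracted back so that the final moduli space is the blowup of $Z_5\tor$ at the origin only. Your proposal neither predicts these F-directions nor explains why they contribute no extra blowups or boundary divisors, and the ``tangency/triple collision'' you single out as the delicate point is not the actual phenomenon: all the relevant degenerations here are concurrency of branch curves or their degeneration to the boundary of the toric surface.
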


% \begin{remark}
%   The proofs show that in all cases the map $\oM_d\ubur\to\oM\uslc$ is
%   a bijection to the image.
% \va{is that really true? maybe skip?}
%   We think that in fact the closure of
%   $M_d\ubur$ in $\oM\uslc$ is normal and that the moduli stack
%   underlying $\oM_d\ubur$ is smooth. Proving this, however, requires a
%   delicate investigation of the deformation theory, which we do not do
%   here.
% \end{remark}

The description of $\oM_d\ubur$ as stacks in these theorems gives both
the isomorphism classes and automorphism groups of the surfaces.

We work over $\bC$.  Most results can be
extended to any field of characteristic different from~$2$ since our
moduli spaces are obtained from explicit toric varieties by simple
blowups and taking quotients, and the abelian covers $X\to Y$ are
$\mu_2^2$-covers of surfaces. We do not belabor this point, as this is
not the focus of the present paper.

In \cite{alexeev2025kappa-classes} the results of this paper were
applied to investigate the enumerative geometry and the kappa classes
(generalized MMM classes)
of the moduli space $\oM\ubur\foura$.  We also note that the methods
of \cite{alexeev2024explicit-compactifications} and of the present paper
apply to other families of general type constructed from planar line
configurations. This includes for example Kulikov
surfaces studied in \cite{kulikov2004old-examples, chan2013kulikov-surfaces}. 

The version of this paper on arXiv includes a sage \cite{sagemath}
script with explicit computations of the fans, symmetry groups and
moduli spaces appearing here.

\begin{acknowledgements}
  The first author was partially supported by the NSF under
  DMS-2201222. We thank Rita Pardini for useful comments.
\end{acknowledgements}

\section{Definition of Burniat surfaces} 
\label{sec:burniat-surfaces}

Burniat surfaces can be defined in two ways, either as
$\bZ_2^2$-covers $\pi\colon X\to Y$ of del Pezzo surfaces as in
\cite{burniat1966sur-les-surfaces, peters1977on-certain-examples} or
using quotients of certain hypersurfaces in products of three elliptic
curves as in \cite{peters1977on-certain-examples,
  inoue1994some-new-surfaces}. We use the first way. By the general
theory of abelian covers \cite{pardini1991abelian-covers},
%further reviewed in \cite{alexeev2012non-normal-abelian},
a Galois $\bZ_2^2$-cover of a smooth rational surface $Y$ can be
defined by three divisors $R$, $G$, $B$ (for which we use the three
primary colors red, green and blue) such that $R+G$, $G+B$ and $B+R$
are $2$-divisible in the class group of $Y$.
\begin{figure}[htbp]
  \centering
  \includegraphics[width=340pt]{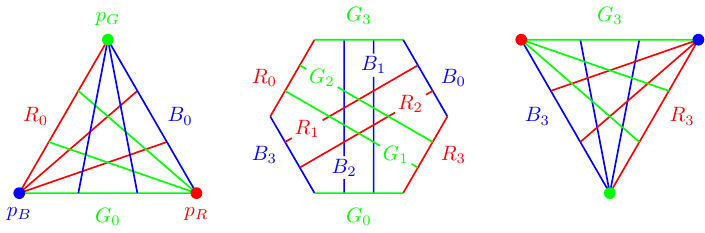}      
  \caption{Burniat arrangements on $\bP^2$ and $\Sigma=\Bl_3\bP^2$}
  \label{fig-burniat-config}
\end{figure}
For primary Burniat surfaces, i.e. those of degree~$6$, $Y=\Sigma$ is the
Cremona surface, and the three divisors are split into $12$ curves
$R_i$, $G_i$, $B_i$, $i=0,1,2,3$ as shown in the middle panel of 
Figure~\ref{fig-burniat-config}.  We call the curves with $i=0,3$
\emph{boundary} and those with $i=1,2$ \emph{interior}. For secondary
(degrees $5$ and $4$) and tertiary (degree $3$)
Burniat surfaces, one further considers the special configurations when
some of the interior curves pass through one, two, or three common
points, as in Figure~\ref{fig-secondary-burniat}.
\begin{figure}[htbp]
  \centering
  \includegraphics[width=360pt]{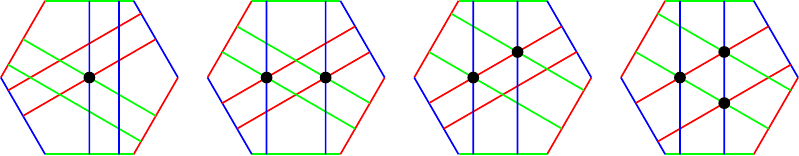}
  \caption{Cases of degree 5, 4 non-nodal, 4 nodal, 3}
  \label{fig-secondary-burniat}
\end{figure}

%\newpage
\begin{definition}
  We define secondary and tertiary Burniat surfaces as follows.   
  Consider the following diagram:
  \begin{center}
    \begin{tikzcd}
      X_6  \ar[d, "\pi_6"] &
      X' \ar[l, swap, "\wt g"] \ar[r, "\wt h"] \ar[d, "\pi'"] &
      X \ar[d, "\pi"] \\
      Y_6  &
      Y' \ar[l, swap, "g"] \ar[r, "h"] &
      Y
    \end{tikzcd}
  \end{center}
  Here, $Y_6=\Sigma$ is the Cremona surface and
  $\pi_6\colon X_6\to Y_6$ is the $\bZ_2^2$-cover defined by the
  divisors $R=\sum R_i$, $G=\sum G_i$, $B=\sum B_i$.

  Let $f\colon Y'\to Y_6$ be the blowup of
  $n=1$, $2$ or $3$ distinguished points of
  Figure~\ref{fig-secondary-burniat}, with the exceptional curves
  $E_1,\dotsc, E_n$, and 
  the morphism $\pi'\colon X'\to Y'$
  be the $\bZ_2^2$-cover defined by the strict preimages $R'_i$,
  $G'_i$, $B'_i$; the curves $E_j$ are not part of the ramification
  divisors.
  One easily computes that
  \begin{displaymath}
    K_{Y'} + \tfrac12 D' = -\tfrac12 K_{Y'} = g^*(K_{Y_6} + \tfrac12
    D_6) - \sum_{s=1}^n \tfrac12 E_s,
  \end{displaymath}
  where $D_6=R+G+B$ and $D' = R' + G' + B'$ are the total branch divisors on
  $Y_6$ and $Y'$, and $D'$ is the strict preimage of $D_6$.
  
  Let $C'$ be one of the interior curves on $Y'$. For its reduced preimage
  on $X'$ one has
  \begin{math}
    (\pi'{}\inv C')^2 = (C')^2 = -n(C'),
  \end{math}
  where $n(C')$ is the number of the blown up special points on $C'$.
  It easily follows that if all $n(C')\le1$, as in the first two cases
  of Figure~\ref{fig-secondary-burniat}, then $Y'$ is a del Pezzo
  surface. When some $n(C')=2$ as in the last two nodal cases, $Y'$ is
  an almost del Pezzo surface, with big and nef $-K_{Y'}$. If
  $n(C')=2$ then $C'$ and $\pi'{}\inv C'$ are both smooth rational
  $(-2)$-curves. Let $h$ and $\wt h$ be the morphisms contracting these
  $(-2)$-curves (if there are any) to the nodes,
  i.e. $A_1$-singularities on $Y$ and $X$ respectively. We have an
  induced $\bZ_2^2$-cover $\pi\colon X\to Y$.

  Denote the divisor
  $h(D')$ by $D$. Then one has
  \begin{displaymath}
    K_{X'} = \pi'{}^*(K_{Y'}+\tfrac12 D') = \tfrac12 \pi'{}^*(-K_{Y'}),
    \quad
    K_{X} = \pi{}^*(K_{Y}+\tfrac12 D) = \tfrac12 \pi{}^*(-K_{Y}),
  \end{displaymath}
  It follows that $X$ is a surface with $0$, $1$ or $3$ nodes and with
  an ample canonical class of degree $d=K_X^2 = 6-n$. These are the
  secondary and tertiary Burniat surfaces. There are four families of
  them, one for each of the four cases above.
\end{definition}

\section{Degree~$6$} 
\label{sec:deg6}

We refer the reader to \cite{kollar2023families-of-varieties} for the
definitions of log canonical and semi log canonical singularities and
of KSBA stable pairs. In brief, a KSBA pair consists of a projective
variety $X$ and a $\bQ$-divisor $B = \sum b_iB_i$ on it such that the
pair $(X,B)$ has semi log canonical singularities and the log
canonical divisor $K_X+B$ is ample. Once one fixes the basic invariants:
$\dim X$, the coefficient vector $(b_i)$ and the volume
$(K_X+B)^{\dim X}$, KSBA pairs admit a projective KSBA moduli space.

\subsection{The main theorem}

For primary Burniat surfaces, the main result of
\cite{alexeev2024explicit-compactifications} is the following theorem.
(Recall that the convention in the present paper is that $\oM_6\ubur$
is the \emph{normalization} of the closure of $M\ubur$ in $\oM\uslc$.)

% in which $\oM_6(c)$ for $\frac13 \le c \le \frac12$ are certain
% explicit normal varieties whose points are in bijection with the
% isomorphism classes of labeled weighted KSBA pairs
% $\left(Y, \frac12 \sum_{i=0,3} (R_i+G_i+B_i) + c\sum_{i=1,2}
%   (R_i+G_i+B_i)\right)$. The interior $M_6(c)$ parameterizes log
% canonical pairs appearing in the middle panel of
% Figure~\ref{fig-burniat-config} with these weights.
% We refer to
% \cite{alexeev2024explicit-compactifications} for their definition and
% properties.
% (We note that one must have $c>\frac14$ for the divisor
% $K_Y + \frac12 \sum_{i=0,3} (R_i+G_i+B_i) + c\sum_{i=1,2}
% (R_i+G_i+B_i)$ to be ample, in order for the pair above to be KSBA
% stable, and that the moduli space does not change between
% $c=\frac14 + \epsilon$ and $c=\frac13$.)

\begin{theorem}\label{thm-intro:burniat}
  The KSBA compactification
  $\oM\ubur_6$ of the moduli space of primary Burniat surfaces is
  isomorphic to the quotient $\oM_6 / \Gamma_6$, where $\oM_6$ is the
  KSBA compactification of the moduli space of pairs
  $(Y_6, \sum_{i=0}^3 \frac12(R_i+G_i+B_i))$ shown in the middle
  panel of Figure~\ref{fig-burniat-config}, and the group
  $\Gamma_6=C_3\ltimes S_2^4$ acts by relabeling the curves.
  The space $\oM_6$ appears in the following diagram of intermediate spaces:
  \begin{displaymath}
    \oM_6\tor = \oM_6(\tfrac13) \xleftarrow{\rho_1} \oM_6(\tfrac25)
    \xleftarrow{\rho_2} \oM_6(\tfrac12) = \oM_6
  \end{displaymath}
  For $c=\frac13,\frac25,\frac12$ the variety $\oM_6(c)$ is the
  KSBA compactification of the moduli space of pairs $(Y_6,
  \sum_{i=0,3} \frac12(R_i+G_i+B_i) + \sum_{i=1,2} c(R_i+G_i+B_i))$.

  Further, $\oM_6\tor$ is a projective toric variety corresponding to
  an explicit fan $\fF_6$; $\rho_1$ is the blowup at one smooth point,
  the origin of the torus; and $\rho_2$ is the blowup at six disjoint
  smooth rational curves lying in the smooth locus.
  
  The map $\oM\ubur_6 \to \oM\uslc$ is a bijection to its image.  The
  surfaces $X_6$ appearing on the boundary of $\oM\ubur_6$ are
  $\bZ_2^2$-covers of the pairs
  $(Y_6, \sum_{i=0}^3 \frac12(R_i+G_i+B_i))$ parameterized by $\oM_6$.
  The boundary $\oM\ubur_6 \setminus~M\ubur_6$ is a union of eight
  divisors of types A, B, C, D, E, F, G, H corresponding to minimal
  degenerations, generically of these types, reviewed below.

  For $c=\frac13, \frac25, \frac12$ the moduli spaces $\oM_6(c)$ come with
  families of KSBA stable pairs
\begin{displaymath}
  f_6(c)\colon \big( \cY_6(c), \sum_{i=0,3} \tfrac12 (\cR_i+\cG_i+\cB_i) +
  \sum_{i=1,2} c(\cR_i+\cG_i+\cB_i) \big) \to \oM_6(c)
\end{displaymath}
\end{theorem}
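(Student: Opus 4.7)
The plan is to deduce this structural theorem from the theory of abelian covers together with an explicit wall-crossing analysis for KSBA moduli of labeled line configurations on $\Sigma = \Bl_3\bP^2$ as the coefficients of the interior curves vary.

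First, by Pardini's theory of $\bZ_2^2$-covers, a primary Burniat surface $X_6$ together with its Galois structure is equivalent to the data of a labeled pair $(Y_6, R_\bullet, G_\bullet, B_\bullet)$ with twelve labeled curves arranged as in Figure~\ref{fig-burniat-config}; conversely $X_6$ is recovered as the $\bZ_2^2$-cover determined by $R+G$, $G+B$, $B+R$. Forgetting the labeling corresponds exactly to quotienting by the combinatorial symmetry group $\Gamma_6 = C_3 \ltimes S_2^4$, where $C_3$ cyclically permutes the three colors and the four $S_2$ factors permute curves with identical geometric role. This reduces the problem to computing the moduli $\oM_6$ of labeled pairs at coefficient $\tfrac12$.

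Second, I would study $\oM_6$ by varying the coefficient $c$ of the interior curves and comparing the KSBA moduli $\oM_6(c)$ for $c \in [\tfrac13, \tfrac12]$. At the lower endpoint $c = \tfrac13$ the log divisor restricts trivially to each interior curve; the limiting stable pairs are expected to be toric degenerations of the twelve-line configuration, and $\oM_6(\tfrac13)$ should coincide with a projective toric variety $\oM_6\tor$ whose fan $\fF_6$ is the secondary-fan-type object read off from the combinatorial types of degenerate configurations. Establishing this toric identification, and writing down $\fF_6$, is one main task.

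Third, as $c$ increases from $\tfrac13$ to $\tfrac12$ I would locate the wall values where the moduli functor changes. At $c$ slightly above $\tfrac13$, a new boundary stratum appears which globally traces out the origin of the open torus in $\oM_6\tor$, producing the single smooth blowup $\rho_1$. At $c = \tfrac25$, six additional slc boundary divisors appear from one-parameter families in which a pair of opposite-colored interior curves becomes tangent; they glue to six disjoint smooth rational curves in the smooth locus, giving $\rho_2$. Ruling out any further walls in $(\tfrac13, \tfrac12) \setminus \{\tfrac25\}$ amounts to a careful log-canonical-threshold computation on one-parameter degenerations of the Burniat configuration and is the technical heart of the argument.

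Finally, the universal families $f_6(c)$ are constructed inductively: start with the torus-invariant universal family over $\oM_6\tor$ and apply explicit modifications of the fibers at each blowup center to insert the expected slc tails, then pull back along $\rho_1, \rho_2$. The eight boundary divisor types $A,B,C,D,E,F,G,H$ of $\oM\ubur_6$ are then identified with the $\Gamma_6$-orbits of boundary divisors on $\oM_6$, and the map $\oM\ubur_6 \to \oM\uslc$ is bijective onto its image because the associated $\bZ_2^2$-cover reconstructs the labeled pair from the stable surface up to the $\Gamma_6$-action. The main obstacles are the explicit determination of $\fF_6$ and the verification that only the two claimed walls arise; both reduce to a finite but intricate classification of semi-log-canonical degenerations of the twelve-line configuration.
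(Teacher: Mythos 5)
First, a point of order: the paper you are working from does not prove this statement at all. Theorem~\ref{thm-intro:burniat} is quoted as the main result of \cite{alexeev2024explicit-compactifications} and used as a black box, so there is no internal proof to compare yours against; your proposal has to be measured against the strategy of that reference. In broad outline yours matches it: Pardini's theory trades the cover $X_6$ for the labeled pair $(Y_6,\sum_{i=0}^3\tfrac12(R_i+G_i+B_i))$ modulo the relabeling group $\Gamma_6$, the low-weight space $\oM_6(\tfrac13)$ is identified with a toric compactification of the configuration torus $T_6$, and one then crosses walls in the coefficient $c$ up to $\tfrac12$.

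However, you misidentify the geometry behind the second wall, and this is a genuine error rather than a stylistic difference. The modification $\rho_2$ at $c=\tfrac25$ has nothing to do with ``a pair of opposite-colored interior curves becoming tangent'': the relevant degenerations are concurrences, not tangencies. As recorded in Section~\ref{sec:degenerations-deg6}(4), the type E degeneration occurs when all six interior curves pass through a common point (log canonical threshold $2/6=\tfrac13$, whence the blowup $\rho_1$ at the single point of $T_6$ where this happens, namely the origin), while the type F degeneration occurs when exactly five of them are concurrent (threshold $2/5$, whence the blowup $\rho_2$ along the six disjoint rational curves of Equation~\eqref{eq:six-curves}, one for each choice of omitted interior curve). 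With the wrong wall geometry, the log-canonical-threshold computation you propose as ``the technical heart'' would not produce the six curves appearing in the statement. Beyond this, the genuinely hard steps are named but not carried out: that $\oM_6(\tfrac13)$ really is the projective toric variety with the explicit fan $\fF_6$; that no walls other than $\tfrac13$ and $\tfrac25$ occur in $(\tfrac13,\tfrac12]$; and that the $\bZ_2^2$-cover is intrinsic to $X_6$, which is what injectivity of $\oM\ubur_6\to\oM\uslc$ actually requires (for smooth Burniat surfaces this comes from the bicanonical map, cf.\ \cite{bauer2010burniat-II}, and it must still be verified for the slc surfaces on the boundary). As a skeleton of the argument of \cite{alexeev2024explicit-compactifications} the proposal is serviceable, but the one concrete geometric claim it commits to about the wall structure is wrong.
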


Increasing the weight $c$ from
$\frac13$ to $\frac25$ to $\frac12$ changes the stability conditions
for the pairs, so it changes the moduli space and the family of pairs over
it.

% \subsection{The relabeling group}
% \label{sec:Gamma6}
    
% The relabeling group $\Gamma_6=(C_3\ltimes S_2^3)\times S_2$, acts as
% follows: $C_3$ permutes the RGB colors, $S_2^3$ switches
% $R_1\leftrightarrow R_2$, $G_1\leftrightarrow G_2$,
% $B_1\leftrightarrow B_2$ independently.  The final $S_2$ is the
% Cremona involution acting on the surface $\Sigma$, on the level of
% curves it switches $R_0\leftrightarrow R_3$, $G_0\leftrightarrow G_3$,
% $B_0\leftrightarrow B_3$ simultaneously.

\subsection{Degenerations}
\label{sec:degenerations-deg6}

We briefly describe the degenerations of types A, B, C, D, E, F, G, H
giving one example in a $\Gamma_6$-orbit. We refer
to \cite{alexeev2024explicit-compactifications} for complete details. 

\smallskip\noindent
(1) The degenerations A, B, C, D are toric, occurring when some of
  the interior curves degenerate to the boundary, see
  \cite[Figs.\ 3, 4, 6]{alexeev2024explicit-compactifications}:

\begin{enumerate}\label{page:ABCD}
\item[(A)] $R_1, R_2 \rightsquigarrow R_0 + G_3$, i.e. both $R_1$ and
  $R_2$ degenerate to $R_0+G_3$, while the ratio of the coordinates
  defining $R_1$ and $R_2$ stays invertible.
\item[(B)] $R_1 \rightsquigarrow R_0 + G_3$.
\item[(C)] $R_1 \rightsquigarrow G_0+R_3$ and $B_1\rightsquigarrow B_0+R_3$.
\item[(D)] $R_1 \rightsquigarrow R_0 + G_3$ and $B_1\rightsquigarrow R_0+B_3$.
\end{enumerate}

\smallskip\noindent (2) The degenerations E, F, G, H are nontoric.  
  
\smallskip\noindent (3) \label{page:H} In the degenerations B, G, H the limit
pairs $(Y_6,\frac12 D_6)$ are log canonical and do not change the
underlying surface $Y_6=\Sigma$.  One may say that these degenerations
are not ``essential''.  (Note, however, that the $\bZ_2^2$-covers
$X_6\to Y_6$ are no longer canonical.)
  
The case B was explained above.
% B occurs when a single internal curve, e.g. $R_1$, degenerates to
% $R_0+G_3$ or $G_0+R_3$, i.e. it degenerates to the boundary of the
% toric surface $Y=\Sigma$ which stays the same.
The case G occurs when two curves in the same pencil coincide, e.g.
$R_1=R_2$; we denote this case $H_R$. The case H occurs when three
curves from the different pencils, e.g. $R_1,G_1,B_1$, pass through a
common point; we denote this case $H_{111}$.

\smallskip\noindent
The other degenerations do change the underlying surface $Y_6$.

\smallskip\noindent (4) \label{page:EF} The degenerations E and F occur when
$6$ (resp. $5$) of the interior curves pass through a common point. In
this case the surface $\Sigma$ degenerates to a union of two
irreducible components $(\bP^1\times\bP^1)\cup\bP^2$
(resp. $\Bl_1\Sigma\cup\bP^2$).  There is also a mixed degeneration EF
when the limit surface is $\Bl_1\Sigma\cup\bF_1\cup\bP^2$,
\cite[Figs. 10,11,12]{alexeev2024explicit-compactifications}.

\smallskip\noindent (5) All other degenerations, of types AA, AB, CDD etc., are
obtained as combinations of these minimal degenerations.

\subsection{The underlying surfaces}
\label{sec:surfaces-deg6}

In Figure~\ref{fig-deg6} we list all types of surfaces $Y_6$
underlying the stable pairs appearing in $\oM_6$. In these figures,
the bold curves indicate the double locus of the surfaces $Y_6$. The
irreducible components of $Y_6$ are glued together along the curves in
this double locus. Generically, the irreducible components of $X_6$
are in bijection with those of $Y_6$ and the double locus of $X_6$ is
the preimage of the double locus of $Y_6$.

The double curves are not part of the divisors $R$, $G$, $B$. The
colors indicate the ramification divisors of the $\bZ_2^2$-covers
$X_6\to Y_6$ when restricted to irreducible components of $Y_6$.  It
was difficult to draw the surfaces of types E and F in one picture, so
we show each as two parts; they are glued along the black (resp. red)
bold curve.  Similarly, a surface of type EF is the union of three
parts, shown separately.

% The surfaces underlying the pairs in Figure~\ref{fig-irr6} are as
% follows:

% The degenerations of types A--CCDDD are toric, while those of types
  % E, F, EF are not. The reader may notice that the letter B is
  % missing. This one corresponds to a toric degeneration in which a
  % single curve among the curves $R_i$, $G_i$, $B_i$ with $i=1,2$
  % degenerates to the boundary. In this degeneration the pair $(Y,
  % \frac12\sum_{i=0}^3 (R_i+G_i+B_i))$ remains log canonical, so this
  % is not an ``essential'' degeneration.

  \medskip

In Figure~\ref{fig-irr6} we list the types of irreducible components
$V$ of $Y_6$ appearing in these pairs. The number in the parentheses
is the volume, defined as the degree $L^2$ of the restriction of the
following ample Cartier divisor to $V$:
  \begin{displaymath}
    L = 2\big(K_{Y_6} + \tfrac12 D_6 \big)|_{V},
    \quad\text{where} \  D_6 = \sum_{i=0}^3 (R_i+G_i+B_i).
  \end{displaymath}
  The volumes are positive integers and they add up to
  $4(K_{Y_6}+\frac12 D_6)^2 = K_{X_6}^2 = 6$.

Table~\ref{tab:deg6} lists the irreducible components for each surface
in Figure~\ref{fig-deg6}.

% \begin{notations}
%   We use the three primary colors red, green, blue to denote the
%   curves $R_i$, $G_i$, $B_i$. The double curve locus
%   $\Delta=\cup \Delta_s$, the union of intersections $Y_6^{(k)}\cap Y_{s'}$
%   of irreducible components, is shown in bold. The volume of a
%   component $Y_6^{(k)}$ is the degree $L_6^{(k)}^2$ of the restriction of the
%   ample Cartier divisor
%   \begin{displaymath}
%     L_6^{(k)} = 2\big(K_Y + \tfrac12 D\big)|_{Y_6^{(k)}} =
%     2\big(K_{Y_6^{(k)}} + \tfrac12 D|_{Y_6^{(k)}}  + \Delta_s \big).
%   \end{displaymath}
%   The volumes are
%   positive integers and they add up to $4(K_Y+\frac12 D)^2 = K_X^2 = 6$.
% \end{notations}

% (0) $\Sigma$, (1) $\bP^1\times\bP^1$, (2) $\bP^1\times\bP^1$, (3)
% $\bF_1$, (4) $\bP^2$, (5) $\bP^1\times\bP^1$, (6) $\bP^2$, (7)
% $\Bl_1\Sigma$, (8)~$\bP^2$, (9) $\bF_1$. 

\subsection{The toric setup}
\label{sec:toric-setup-deg6}

Now fix the toric structure on the Cremona surface $\Sigma$, i.e. the
embedding into $\Sigma$ of the torus
\begin{displaymath}
  T_\Sigma= \Spec\bC[r,g,b] / (rgb-1)         \simeq (\bC^*)^2.
\end{displaymath}
Consider the configuration of the interior curves
$R_1,R_2,G_1,G_2,B_1,B_2$ as in the middle panel of
Figure~\ref{fig-burniat-config}. Let the interior curves of the same
color possibly coincide but assume that they do not degenerate to the
boundary. Then the locations of these curves are determined by a
$6$-tuple $(r_1,r_2,g_1,g_2,b_1,b_2)\in(\bC^*)^6$, where for $i=1,2$
the condition $r_i\to 0$ means $R_i\rightsquigarrow R_0 + G_3$ and
symmetrically for $G_i,B_i$.  Denote this $6$-dimensional torus by
$T_\cY$.

Acting by the torus $T_\Sigma$ on $T_\cY$, the six coordinates will change to
\begin{displaymath}
  (r_1,r_2,g_1,g_2,b_1,b_2) \to (rr_1,rr_2,gg_1,gg_2,bb_1,bb_2)
\end{displaymath}
So, up to the toric automorphism of $\Sigma$, the configuration
corresponds to a point in the four-dimensional quotient torus
\begin{displaymath}
  T_6 = T_\cY/T_\Sigma = \Spec \bC \left[ r_1g_1b_1 ,
  \frac{r_1}{r_2}, \frac{g_1}{g_2}, \frac{b_1}{b_2} \right].
\end{displaymath}

For the cocharacter lattices of these three tori, we get an exact sequence
\begin{displaymath}
  0 \to N_\Sigma \to N_\cY \to N_6 \to 0,
\end{displaymath}
where $N_\cY=\bZ^6$,
$N_\Sigma =
\{(\rho,\rho,\gamma,\gamma,\beta,\beta)\}$ with
$\rho+\gamma+\beta=0$, $N_6\subset\bZ^4$ is the set of
$4$-tuples $(\delta,\bar\rho,\bar\gamma,\bar\beta)$ such that
$\delta + \bar\rho + \bar\gamma + \bar\beta  = 0\pmod 2$, and the map
$N_\cY\to N_6$ is
\begin{displaymath}
  (\rho_1,\rho_2, \gamma_1,\gamma_2, \beta_1,\beta_2) \to
  (\rho_1+\rho_2+\gamma_1+\gamma_2+\beta_1+\beta_2,
  \rho_1-\rho_2, \gamma_1-\gamma_2, \beta_1-\beta_2).
\end{displaymath}

By \cite{alexeev2024explicit-compactifications}, the moduli space
$\oM_6\tor = \oM_6(\frac13)$ is a toric variety that is a
compactification of the torus $T_6$. Its fan $\fF_6$ has $42$ rays
with the integral generators that comprise the $\Gamma_6$-orbits of
the basic vectors
\begin{displaymath}
\text{A}\  (2,0,0,0),\qquad
\text{B}\  (1,1,0,0),\qquad
\text{C}\  (1,1,1,1),\qquad
\text{D}\  (0,1,0,1).
\end{displaymath}
under the relabeling group $\Gamma_6$ defined in the next section.
% \begin{enumerate}
% \item[A] $(2,0,0,0)$, orbit of $2$,
% \item[B] $(1,1,0,0)$, orbit of $12$,
% \item[C] $(1,1,1,1)$, orbit of $16$,
% \item[D] $(0,1,0,1)$, orbit of $12$.
% \end{enumerate}
They correspond to the toric degenerations of types A, B, C, D listed
of Section~\ref{sec:degenerations-deg6}.

\subsection{$\bZ_2^2$-covers and the relabeling group}
\label{sec:covers-and-relabeling}

As we mentioned at the beginning of
Section~\ref{sec:burniat-surfaces}, the triple of divisors $R,G,B$ on
a a surface $Y$ defines a $\bZ_2^2$-Galois cover, the Burniat surface,
which is the main subject of this paper. The general theory in the
case of a smooth base $Y$ was given in
\cite{pardini1991abelian-covers}, and it was extended to the case of
semi log canonical covers $X\to Y$ in in
\cite{alexeev2012non-normal-abelian}. Thus, a stable pair
$(Y_6, \sum_{i=0}^3 \frac12(R_i+G_i+B_i))$ gives us a stable Burniat
surface $\pi\colon X_6\to Y_6$. A cyclic permutation of the RGB colors
corresponds to choosing a different basis in $\bZ_2^2$: R, G, B stand
for the three nonzero elements of $\bZ_2^2\setminus~0$.
(Only even permutations in $S_3$ are allowed by the symmetry of
Figure~\ref{fig-burniat-config}.)
This choice
does not change the isomorphism class of the cover $X_6$. Similarly,
switching $R_1\leftrightarrow R_2$ or $G_1\leftrightarrow G_2$ or
$B_1\leftrightarrow B_2$ would not change the sums $R,G,B$, so it
would not change $X_6$. Finally, the Cremona surface $\Sigma$ has the
Cremona involution which preserves the divisors $R,G,B$ while sending
$R_0\leftrightarrow R_3$, $G_0\leftrightarrow G_3$,
$B_0\leftrightarrow B_3$ simultaneously (we do not relabel the
interior curves). Thus, to obtain the moduli space of the surfaces
$X_6$, we have to divide the moduli space of the pairs
$(Y_6, \sum_{i=0}^3 \frac12(R_i+G_i+B_i)$ by a relabeling group
$\Gamma_6=(C_3\ltimes S_2^3)\times S_2$, of order $48$.

\begin{notations}\label{not:relabeling-group}
  We fix the notations for the action of the relabeling group
  $\Gamma_6=(C_3\ltimes S_2^3)\times S_2$ on the three tori:
  \begin{enumerate}
  \item The cyclic group $C_3$ cyclically permutes the RGB
    colors:
    \begin{displaymath}
      (r_1,r_2,g_1,g_2,b_1,b_2) \to
      (g_1,g_2,b_1,b_2,r_1,r_2) \to
      (b_1,b_2,r_1,r_2,g_1,g_2) \to\text{back}.
    \end{displaymath}
  \item $S_2\times S_2\times S_2$ permutes the interior curves of the
    same color. The involution $s_r$ switches 
    \begin{math}
      s_r\colon r_1\leftrightarrow r_2
    \end{math}
    while keeping the other four coordinates intact, and similarly for
    $s_g\colon g_1\leftrightarrow g_2$ and $s_b\colon
    b_1\leftrightarrow b_2$. 
    \item The final $S_2$ is the Cremona involution on $\Sigma$. It sends
    \begin{displaymath}
      \Cr\colon (r_1,r_2,g_1,g_2,b_1,b_2) \to
      (r_1\inv,r_2\inv,g_1\inv,g_2\inv,b_1\inv,b_2\inv).
    \end{displaymath}
  \end{enumerate}
\end{notations}

It is then easy to see how $\Gamma_6$ acts on the cocharacter
lattices. On $N_\cY$, $C_3$ cyclically permutes $\rho_i,\gamma_i,\beta_i$; $s_r$
permutes $\rho_1\leftrightarrow\rho_2$ and similarly for $s_g,s_b$;
and the Cremona involution sends
$(\rho_1,\rho_2,\gamma_1,\gamma_2,\beta_1,\beta_2)$ to
$(-\rho_1,-\rho_2,-\gamma_1,-\gamma_2,-\beta_1,-\beta_2)$.  The
induced action of $\Gamma_6$ on $N_6$ is as follows:
\begin{enumerate}
\item $C_3$ cyclically permutes $\bar\rho$, $\bar\gamma$, $\bar\beta$;
\item The involution $s_r$ sends
  $(\delta, \bar\rho, \bar\gamma, \bar\beta)$ to
  $(\delta, -\bar\rho, \bar\gamma, \bar\beta)$, and similarly for
  $s_g$, $s_b$.
\item The Cremona involution sends
  $(\delta, \bar\rho, \bar\gamma, \bar\beta)$ to
  $(-\delta, -\bar\rho, -\bar\gamma, -\bar\beta)$.
\end{enumerate}

\smallskip

Reflecting the degenerations of types E and F of
Section~\ref{sec:degenerations-deg6}(4), the morphism
$\rho_1\colon \oM_6(\frac25)\to\oM_6\tor$ is the blowup at the origin
$(1,1,1,1) \in T_6$, and the morphism
$\rho_2\colon \oM_6 = \oM_6(\frac12)\to \oM_6(\frac25)$ is the further
blowup at the disjoint union of six smooth rational curves, the strict
preimages of the closures of the six $\bC^*\subset T_6$ of the form
$(x,1,1,1,1,1) \mod T_\Sigma$, $(1,x,1,1,1,1) \mod T_\Sigma$, etc.,
with $x\in\bC^*$. Using the nonsymmetric coordinates $r_1g_1b_1$,
$r_1/r_2$, $g_1/g_2$, $b_1/b_2$ on $T_6$, they are
\begin{equation}\label{eq:six-curves}
  (x,x,1,1),\ (1,x\inv,1,1),\ (x,1,x,1),\ (1,1,x\inv,1),\
  (x,1,1,x),\ (1,1,1,x\inv).
\end{equation}

\section{Evolution of  irreducible components}
\label{sec:evolution}

Let $V$ be one of the irreducible components of a del Pezzo surface
$Y_6$ that appears in Figure~\ref{fig-irr6}, and denote its boundary
(pictured by bold lines) by $\Delta$.
Suppose that some of the
triples of the RGB divisors are incident at common points
$P_{i_sj_sk_s} = R_{i_s}\cap G_{j_s}\cap B_{k_s}$, $s=1,\dotsc,
n$. Let $V'\to V$ be a blowup at this point and $R'_i$, $G'_i$, $B'_i$
be the strict preimages of $R_i$, $G_i$, $B_i$. Further assume that
the doubled log canonical divisor
\begin{displaymath}
L' = 2 \big(K_{V'} + \Delta + \sum_{i=0}^3 \tfrac12 (R'_i+G'_i+B'_i)\big)
\end{displaymath}
is nef. It follows from the standard results of the Minimal Model
Program (and is very easy to check anyway for these surfaces) that
some multiple $|mL'|$ defines a contraction $V'\to V'\can$ to the
canonical model. If $L'$ is big then the pair
$(V'\can, \Delta\can + \frac12(R'\can + G'\can + B'\can))$ has log
canonical singularities and ample log canonical class; in other words
it is an irreducible KSBA stable pair.

\begin{notation}\label{not:new-components}
  If a surface is obtained from one of the surfaces $\hash m(v)$ of
  degree~6 of Figure~\ref{fig-irr6} with volume $L^2=v$ by making $n$
  blowups and then contracting by the nef line bundle $L'$, we denote
  it by $\hash m_n(v-n)$.
  % for the two nodal cases $3, 4b$, and by
  % or $\hash m_n(v-n)$ for the two non-nodal cases $4a, 5$.
  Here, $v-n = L'{}^2$ is the volume of the new surface.

  % We use the
  % primed notation for the surfaces which only occur in the nodal
  % (degree $3$ and $4b$) cases. We use double primes for the truly
  % weird surfaces as they occur.
\end{notation}

The above procedure is exactly how irreducible components of primary
stable Burniat surfaces give birth to those of secondary and and
tertiary Burniat surfaces. A surface of type $\hash m_n(v-n)$ is a
child of a surface of type $\hash m(v)$.  In
Figure~\ref{fig-new-irr} we list and name them. Note that
$\hash2_1(1) = \hash4(1)$ and the first kind of
$\hash 6_2(2) = \hash5(2)$ are not new, they have already appeared in
Figure~\ref{fig-irr6}.

Consider now the case when the surface $\hash8(1)$, isomorphic to
$\bP^2$, is blown up once. The volume goes down by one, so using the
above notation, we would call this case $\hash8_1(0)$. One has
$V'=\Bl_1\bP^2\simeq\bF_1$. The divisor $L'$ in this case is not
big. But is nef and the linear system $|L'|$ defines a contraction
$\bF_1\to\bP^1$ along the ruling. When this situation occurs in the
following sections, the entire component is contracted to a $\bP^1$.

A variation of this is the component $\hash8(1)$ blown up twice, a
case that we could denote $\hash8_2(-1)$. In this case $L'$ is no longer
nef and there is a $(-1)$-curve $C$, the strict preimage of a line
through the two blown up points such that $L'\cdot C=-1$. In fact,
$L'=C$. Let $f\colon V'\to V''$ be the contraction of $C$ to a
point. We have $L''= f_*L' = 0$, so the linear system $|L''|$ defines
the contraction to a point. When this situation occurs in
Section~\ref{sec:deg4b}, it leads to a flip. The entire component is
contracted, and a neighboring irreducible component is modified: a
point on it is blowup up.

\section{Degree $3$}
\label{sec:deg3}

The method of the proof of
Theorems~\ref{thm-intro:5}--\ref{thm-intro:3} is essentially the same,
with some variations. We start with the easiest case of degree~3 where
there are few degenerations to obscure the general picture. Consider
the closed subset
\begin{displaymath}
  Z_3 := H_{211}\cap H_{121} \cap H_{112} \subset \oM_6,
\end{displaymath}
a projective variety that is the intersection of three type H divisors
corresponding to the type H degenerations defined in
Section~\ref{sec:degenerations-deg6}(3). For example, the divisor
$H_{211}$ corresponds to the closed subset of $\oM_6$ where on every
fiber the intersection $R_2\cap G_1\cap B_1$ is nonempty, necessarily
is a point, which we denote $P_{211}$.

\smallskip\emph{Step 1.} Let us describe $Z_3$ explicitly. Its image
under the morphism $\oM_6\to\oM_6\tor$ is the toric variety $Z_3\tor$
which is the closure in $\oM_6\tor$ of the subtorus $T_3\subset T_6$
defined by the equations
\begin{displaymath}
  T_3 = \{r_2g_1b_1 = r_1g_2b_1 = r_1g_1b_2 = 1\} .
\end{displaymath}
It easily follows that $T_3 = \Spec\bC[r_1/r_2]$ and that
$r_1/r_2 = g_1/g_2 = b_1/b_2$. The cocharacter lattice of $T_3$ is
\begin{math}
  N_3 = \bZ(-1,1,1,1) \subset N_6.  
\end{math}

The fan $\fF_3$ of $Z_3\tor$ is the intersection $\fF_6\cap N_3$. Both
$\bR_{\ge0}(-1,1,1,1)$ and $\bR_{\ge0}(1,-1,-1,-1)$ are rays of type C
in $\fF_6$, so they give two rays of type C in $\fF_3$. We see that
$Z_3\tor = \bP^1$, with the points $0$ and $\infty$ corresponding to
degenerations of type C of Section~\ref{sec:degenerations-deg6}. The
variety $Z_3$ is the strict preimage of $Z_3\tor$ under the blowups
$\rho_1, \rho_2$ of Theorem~\ref{thm-intro:burniat}, so
$Z_3 = Z_3\tor = \bP^1$ and the point $1\in\bC^* \subset \bP^1$
corresponds to a divisor of type E.

\smallskip\emph{Step 2.} Now restrict the universal family
\begin{displaymath}
  f_6(\tfrac12)\colon\cY_6(\tfrac12) \to \oM_6(\tfrac12)=\oM_6  
\end{displaymath}
of Theorem~\ref{thm-intro:burniat} to $Z_3$. We get a family
$\cY_6 |_{Z_3}\to Z_3$ in which every fiber is a pair
$(Y_6,\sum_{i=0}^3\frac12(R_i+G_i+B_i))$ with three distinguished points
\begin{displaymath}
  P_{211} = R_2\cap G_1\cap B_1, \ 
  P_{121} = R_1\cap G_2\cap B_1, \
  P_{112} = R_1\cap G_1\cap B_2.
\end{displaymath}
Generically, $Y=\Sigma$ is the Cremona surface and the pairs are those
appearing in the last panel of Figure~\ref{fig-secondary-burniat} in
the definition of tertiary Burniat surfaces.

\begin{lemma}\label{lem:n-pts}
  Let $Z = \cap_{s=1}^n H_{i_sj_sk_s} \subset\oM_6$ be an intersection
  of $n$ type H divisors. Then in every fiber $Y_6$ of the restricted
  family $\cY_6|_Z\to Z$ the $n$ points $P_{i_sj_sk_s}$ are distinct
  and lie in the smooth part of $Y_6$.
\end{lemma}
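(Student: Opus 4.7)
The verification is fiberwise, stratifying $Z$ by its intersection with the boundary divisors of $\oM_6$ from Section~\ref{sec:degenerations-deg6}.

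Over the dense open part of $Z$, each fiber is the Cremona surface $\Sigma$, and $P_{i_s j_s k_s}$ is the torus point with coordinates $(r_{i_s}, g_{j_s}, b_{k_s})$ determined by $r_{i_s} g_{j_s} b_{k_s}=1$. This point lies in the dense torus $(\bC^*)^2 \subset \Sigma$, and hence in the smooth locus. Two such points can coincide only if all three pairs of corresponding same-color curves agree, which would force $Z$ to lie in a divisor of type $H_R$, $H_G$, or $H_B$. A direct check on the cocharacter lattice $N_6$ shows that the monomial relations cutting out the $H_{i_s j_s k_s}$ do not imply any such collapse, so both assertions hold on the generic fiber.

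To extend the conclusion to every fiber I stratify $Z$ by intersecting with each of the eight types of boundary divisors A--H. For types A, B, C, D the degeneration is toric, and a direct fan computation confirms that the triple $(R_{i_s}, G_{j_s}, B_{k_s})$ still meets transversely at a single smooth point on one irreducible component of the limit $Y_6$. Types G and H keep $Y_6 = \Sigma$ smooth and merely impose further incidences among other triples, so they do not identify distinct $P_{i_s j_s k_s}$ by the cocharacter argument above. The nontrivial cases are E, F, and EF, where $Y_6$ degenerates to a reducible surface such as $(\bP^1 \times \bP^1) \cup \bP^2$; for these I use the explicit descriptions from Figure~\ref{fig-deg6} to locate where each of the curves $R_{i_s}, G_{j_s}, B_{k_s}$ ends up in the limit.

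The principal obstacle is the case analysis for types E, F, EF: one must trace, via the torus parametrization, which irreducible component of the limit contains each triple and verify that the triple still meets at a single point disjoint from the double locus along which the components are glued. The structural reason this works is that the double locus of $Y_6$ is built out of the \emph{boundary} curves $R_0, R_3, G_0, G_3, B_0, B_3$ of the original configuration, whereas each triple $(R_{i_s}, G_{j_s}, B_{k_s})$ consists of three \emph{interior} curves of three different colors; since the base points of each of the three pencils lie on those boundary curves, the triple intersection of three interior curves of distinct colors stays in the interior of a single component, away from both the double locus and from every other distinguished point. A uniform verification in each case of Figure~\ref{fig-deg6} then completes the proof.
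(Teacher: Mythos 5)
Your proposal takes a completely different route from the paper, and as written it has a genuine gap. The paper's proof is a two-line uniform argument from the slc condition: if two of the points $P_{i_sj_sk_s}$ coincided, at least five of the coefficient-$\frac12$ curves $R_i,G_i,B_i$ would pass through one point, so blowing it up gives a discrepancy $\le 1-\frac52<-1$; if some $P_{i_sj_sk_s}$ lay on the double locus, the discrepancy would be $\le 1-1-\frac32<-1$; either contradicts the fibers of $\cY_6\to\oM_6$ being semi log canonical. Combined with the observation that every surface in Figure~\ref{fig-deg6} is nonsingular away from its double locus, this proves the lemma at once for all fibers, with no stratification by degeneration type. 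Your plan instead stratifies by the types A--H and defers the real content to a case-by-case check over the E, F, EF strata, which you never carry out; that deferred check is exactly where the work lies, so the proposal does not yet constitute a proof.

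Moreover, the ``structural reason'' you offer for why the triple point avoids the double locus is based on a false premise: you assert that the double locus of a degenerate $Y_6$ is built out of the boundary curves $R_0,R_3,G_0,G_3,B_0,B_3$, whereas the paper states explicitly (Section~\ref{sec:surfaces-deg6}) that the double curves are \emph{not} part of the divisors $R,G,B$ --- they are the new curves along which the irreducible components are glued in the degenerations of types A--F. So even granting the case analysis, the mechanism you propose for ruling out $P_{i_sj_sk_s}$ lying on the double locus does not apply. There is also a smaller soft spot in your generic-fiber step: you argue that two points can coincide only if ``all three pairs of corresponding same-color curves agree,'' but two distinct triples can share curves in one or two colors, so coincidence of the points needs to be excluded by counting how many distinct curves pass through the common point (which is precisely what the discrepancy argument does), not by forcing $Z$ into an $H_R$-type divisor. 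I would recommend replacing the whole stratification by the slc/discrepancy computation.
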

\begin{proof}
  This follows from the condition that each fiber
  $(Y, \sum_{i=0}^3 \frac12 (R_i+G_i+B_i))$ is semi log canonical.
  Indeed, if two of the points coincide then at least five of the
  curves $R_i,G_i,B_i$ pass through the same point; then the
  discrepancy of the log canonical divisor at this point is
  $\le 1-\frac52 < -1$, which contradicts semi log
  canonicity. Similarly, a point $P_{i_sj_sk_s}$ can not be contained
  in the double locus, since then the discrepancy at this point would
  be $\le 1-1-\frac32 < -1$. Finally, we observe that each surface in
  Figures~\ref{fig-deg6} is nonsingular outside of the double
  locus. One can see this from Figure~\ref{fig-irr6} which lists the
  irreducible components.
\end{proof}

So, in our case the points $P_{211}, P_{121}, P_{112}$ are smooth and
distinct. By examining the list of surfaces in Figure~\ref{fig-deg6},
we see that the only possibilities with $Y_6\ne\Sigma$ are the three
surfaces depicted in Figure~\ref{fig-deg3}.

The first and the third surfaces $Y_6$ have three irreducible
components of type $\hash 2(2)\,[\bP^1\times\bP^1]$ of
Figure~\ref{fig-irr6}, and the second one has two irreducible
components, of types $\hash5(2)\,[\bP^1\times\bP^1]$ and
$\hash6(4)\,[\bP^2]$.

Now let $\cY'_3$ be the blowup of $\cY_6|_{Z_3}\to Z_3$ along the
three sections corresponding to the points $P_{211},P_{121},P_{112}$,
and let $\cR'_i, \cG_i', \cB_i'$ be the strict preimages of the
divisors $\cR_i, \cG_i, \cB_i$ on $\cY_3$.  The fibers of the family
$\cY'_3\to Z_3$ are the blowups of the corresponding surfaces $Y_6$ at
three points. By Section~\ref{sec:evolution}, 
the doubled log canonical divisor
\begin{displaymath}
  \cL' = \cO_{\cY'_3} \big(2 (K_{\cY'_3} + \sum_{i=0}^3
  (\cR_i+\cG_i+\cB_i) ) \big)
\end{displaymath}
is relatively nef and big over $Z_3$. By the Base Point Free theorem,
a standard result in the Minimal Model Program, some big multiple
$|m\cL'|$ defines a contraction $\cY_3'\to \cY_3'\can$ to the family
of canonical models of the fibers
$(Y', \sum_{i=0}^3\frac12(R'_i+G'_i+B'_i))$, relative over $Z_3$. The
fibers have semi log canonical singularities and ample log canonical
class $K + \sum_{i=0}^3\frac12(R_i+G_i+B_i)$, so they are KSBA stable
pairs.

Semi log canonical varieties are seminormal, so to describe the fibers
of $\cY_3'\can\to Z_3$, it is enough to find their irreducible
components. We have described those in
Section~\ref{sec:evolution}. They are $\hash2_1(1) = \hash4(1)$,
$\hash5(2)$ and $\hash6_3(1)$.  Over $Z_3\setminus \{0,1,\infty\}$ the
fiber is $\hash0_3(3)$, a del Pezzo surface of degree $3$ with three
$A_1$-singularities.  We picture the stable pairs in
Figure~\ref{fig-final3} and list their irreducible components in
Table~\ref{tab:deg3}.  We set $\oM_3 := Z_3$, this is our moduli space
of stable labeled pairs $(Y_3, \sum_{i=0}\frac12(R_i+G_i+B_i))$.

\smallskip\emph{Step 3.}
At this point, we have constructed a fine moduli space $\oM_3$ of labeled pairs
which comes with a universal family
\begin{displaymath}
  f\colon (\cY_3, \sum_{i=0}^3 \tfrac12(\cR_i+\cG_i+\cB_i))\to \oM_3
\end{displaymath}
As we reviewed in Section~\ref{sec:covers-and-relabeling}, for each
pair in this family the triple of divisors $R, G, B$ defines a
$\bZ_2^2$-cover $\pi\colon X_3\to Y_3$. The map $\pi$ is equivalent to
the surface $Y_3$ together with a triple of divisors, and the color of
each part does not matter. Thus, in order to get the moduli stack of
covers we must divide $\oM_3$ by the action of a relabeling group
$\Gamma_3$ permuting the curves $R_i,G_i,B_i$ is such a way that the
partition into three groups (arbitrarily colored) is preserved. This
leads to the Deligne-Mumford quotient stack $[\oM_3 : \Gamma_3$.

The moduli stack of the surfaces $X_3$ themselves is a $\bZ_2^2$-gerbe
over $[\oM_3 : \Gamma_3]$, since each of them has an additional
automorphism group $\bZ_2^2$ which the pair $(Y_3, \frac12(R+G+B)$
does not have. This is just a general fact about moduli stacks of
covers $X$ vs the moduli stack of the base varieties $Y$ together with
the covering data.  It is also important to note that for a smooth
Burniat surface $X$, the cover $\pi\colon X\to Y$ is intrinsic, as it
is given by the bicanonical linear system, cf. \cite[Cor.\
3.4]{bauer2010burniat-II}.

We now find the relabeling group $\Gamma_3$.  Generically, the surface
$Y'_3$ is a weak del Pezzo which has three $(-2)$-curves
$\{R_1,G_1,B_1\}$ and twelve $(-1)$-curves, nine of which are the
remaining branch curves.  $R_i,G_i,B_i$.
% The branch curves generate
% the Picard group, so any automorphism of $Y'$ is determined by a
% permutation of these curves.
The sets $\{R_2,G_2,B_2\}$ and
$\{R_0,R_3,G_0,G_3,B_0,B_3\}$ do not mix: they are distinguished by
how many other $(-1)$-curves each of these curves intersects.  It
follows that the group of possible relabelings is
\begin{displaymath}
  \Gamma_3 = \Stab_{\Gamma_6} \{ (R_2,G_1,B_1), \ (R_1,G_2,B_1),\
  (R_1,G_1,B_2) \} \subset \Gamma_6.
\end{displaymath}
(The triples $(R_i,G_j,B_k)$ are unordered.)  It is easy to see that
$\Gamma_3 = C_3 \times S_2$ with $C_3$ cyclically permuting the colors
RGB and with $S_2 = \la \Cr\ra$ generated by the Cremona
transformation (cf. Notations~\ref{not:relabeling-group}). This gives
a family of surfaces $X$ over the $\bZ_2^2$-gerbe over the quotient
stack
\begin{displaymath}
  [\oM_3 : \Gamma_3] = [\bP^1 : (C_3\times S_2)].
\end{displaymath}

Moreover, $C_3$ acts trivially on the torus $T_3$ and thus also on the
variety $\oM_3$: it permutes $r,g,b$, but the coordinate on $T_3$ is
$r_1/r_2 = g_1/g_2 = b_1/b_2$.  This $C_3$ only contributes to the
automorphism groups of surfaces $Y_3$ together with three divisors and
of the covers $X_3$ but not to the isomorphism classes.  The Cremona
involution acts nontrivially: it sends $r_1/r_2$ to
$(r_1/r_2)\inv$. Thus, the coarse moduli space of the above stack is
$\bP^1/S_2\simeq \bP^1$, identifying the two divisors of type C.

For the coarse moduli spaces, we have the classifying map
$\oM_3/S_2 \to \oM\slc$ to the moduli space of stable KSBA stable
surfaces. On the open subset $\oM_3\setminus~\{0,\infty,1\} / S_2$ it
is an isomorphism to the image, the moduli space $M_3\ubur$ of smooth
tertiary Burniat surfaces.  It follows that $\oM_3/S_2= \oM\ubur_3$,
the KSBA compactification of $M\ubur_3$, since we defined the latter
to be the normalization of the closure of $M\ubur_3$ in
$\oM\slc$. Finally, we enumerate the boundary divisors in
$\oM_3\ubur$: there are one of type C and one of type E. This
completes the proof of Theorem~\ref{thm-intro:3}.

\begin{remark}
  By analyzing deformations of surfaces $X_3$, one could prove that in
  fact the closure of $M\ubur_3$ in $\oM\slc$ is $\oM_3/S_2$, even
  without normalizing.  A similar analysis for the degrees $4a,4b,5$
  becomes quite difficult.
\end{remark}

\section{Degree $4$ non-nodal}
\label{sec:deg4a} 

To prove Theorem~\ref{thm-intro:4a}, we repeat the same three steps of
the proof of Theorem~\ref{thm-intro:3}, modifying the details appropriately.
We fix the closed subset
\begin{displaymath}
  Z\foura := H_{111} \cap H_{222} \subset \oM_6,
\end{displaymath}
a projective variety that is an intersection of two type H divisors. 

\smallskip\emph{Step 1.} The image of $Z\foura$ under the morphism
$\oM_6\to \oM_6\tor$ is the toric variety $Z\foura\tor$ which is the
closure in $\oM_6\tor$ of the subtorus $T\foura \subset T_6$,
\begin{displaymath}
  T\foura = \{r_1g_1b_1 = r_2g_2b_2 = 1\}
  = \Spec \bC\left[ \frac{r_1}{r_2}, \frac{g_1}{g_2}, \frac{b_1}{b_2}\right] /
  \left( \frac{r_1}{r_2}\cdot \frac{g_1}{g_2}\cdot \frac{b_1}{b_2} - 1\right).
\end{displaymath}
Its cocharacter lattice is
\begin{displaymath}
  N\foura = \{\delta+\bar r+\bar g+\bar b = \delta-\bar r-\bar g-\bar b=0\}
  = \{ \delta = \bar r + \bar g + \bar b = 0\} \subset N_6.
\end{displaymath}
We computed the fan $\fF\foura=\fF_6\cap N\foura$ of $Z\foura\tor$
using the sage script attached to the version of this paper on
arXiv. (The lattice $N\foura$ is small enough to do it by hands as
well.) The result is:

\begin{lemma}
  One has the following:
  \begin{enumerate}
  \item The fan $\fF\foura$ is a nonsingular fan with $6$ rays of type D
    with integral generators
    \begin{displaymath}
      \pm(0,1,-1,0),\ \pm(0,0,1,-1),\ \pm(0,-1,0,1)  
    \end{displaymath}
    and $6$ cones of type DD.
  \item $Z\foura\tor$ is isomorphic to the Cremona surface
    $\Sigma=\Bl_3\bP^2$. 
  \end{enumerate}
\end{lemma}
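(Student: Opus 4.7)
The plan is to compute the intersection fan $\fF\foura := \fF_6 \cap N\foura$ directly, then recognize the resulting $2$-dimensional fan as the standard fan of $\Sigma$.

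\smallskip\noindent\emph{Step 1 (rays).} I would enumerate the rays of $\fF_6$ by going through the four $\Gamma_6$-orbits generated by the basic vectors of types A, B, C, D described in Section~\ref{sec:toric-setup-deg6}. In the orbits of types A, B, and C the first coordinate $\delta$ is always $\pm 2$ or $\pm 1$, so none of these vectors lies in $N\foura = \{\delta = \bar r + \bar g + \bar b = 0\}$. The type D orbit consists of twelve vectors with $\delta = 0$, and a direct inspection shows that exactly six of them also satisfy $\bar r + \bar g + \bar b = 0$, namely $\pm(0,1,-1,0), \pm(0,0,1,-1), \pm(0,-1,0,1)$. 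One must also verify that no extra ray of $\fF\foura$ arises as the intersection of a higher-dimensional cone of $\fF_6$ with $N\foura \otimes \bR$; since the maximal cones of $\fF_6$ fall into finitely many $\Gamma_6$-orbits, this reduces to inspecting one representative per orbit.

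\smallskip\noindent\emph{Step 2 ($2$-cones).} Choose the basis $e_1 = (0,1,-1,0), e_2 = (0,0,1,-1)$ of $N\foura$. In these coordinates the six rays become $\pm e_1, \pm e_2, \pm(e_1+e_2)$, the vertices of a regular hexagon. There are at most six candidate $2$-cones in $\fF\foura$, given by adjacent pairs of hexagonal rays; I would verify that each such pair lies in a DD-cone of $\fF_6$, reducing by the $\Gamma_6$-action to a single representative check. Non-adjacent pairs cannot span a cone of $\fF_6$, since a strictly convex polyhedral cone cannot contain two rays of $N\foura$ pointing into opposite half-planes of $N\foura \otimes \bR$.

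\smallskip\noindent\emph{Step 3 (identification).} The hexagonal fan in the plane, with its six adjacent $2$-cones, is the standard fan of the smooth toric del Pezzo surface $\Bl_3\bP^2 = \Sigma$. Nonsingularity follows since any two adjacent hexagonal generators form a $\bZ$-basis of $N\foura$, which simultaneously gives part (1) and part (2) of the lemma.

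\smallskip\noindent\emph{Main obstacle.} The one step that is not symbol-pushing is Step 2: it requires explicit knowledge of which $2$-cones of $\fF_6$ are of type DD and which pairs of D-rays they connect. This is precisely what the attached sage script automates; a by-hand argument must invoke the $\Gamma_6$-orbit description of the DD-cones of $\fF_6$ from \cite{alexeev2024explicit-compactifications} and check a single representative, then propagate by symmetry.
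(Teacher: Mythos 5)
Your proposal is correct and follows essentially the same route as the paper, whose proof of this lemma consists of recording the result of computing $\fF\foura=\fF_6\cap N\foura$ (via the attached sage script, with the remark that the rank-two lattice $N\foura$ makes a hand computation feasible); your Steps 1--3 are precisely that hand computation, and your ray enumeration in Step 1 is accurate since only the type D orbit has $\delta=0$ and exactly the six stated vectors also satisfy $\bar r+\bar g+\bar b=0$. The one justification I would replace is the ``opposite half-planes'' reason for excluding non-adjacent pairs in Step 2 --- it does not apply to a pair such as $e_1,e_2$, which lie in a common half-plane --- but the conclusion is automatic anyway: $\fF\foura$ is a complete fan in $N\foura\otimes\bR$ (being the intersection of the complete fan of the projective variety $\oM_6\tor$ with a plane), and a complete two-dimensional fan whose rays are the six hexagonal vectors necessarily consists of the six cones spanned by consecutive rays, each of which is unimodular.
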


The six curves of Equation~\eqref{eq:six-curves} intersect
$Z\foura\tor$ transversally in $T_6$, meeting at $(1,1,1,1)$ -- the
center of the blowup $\rho_1$. Thus, their strict preimages -- the
centers of the blowup $\rho_2$ -- are disjoint from
$\rho_*\inv(Z\foura\tor)$ and one has $Z\foura = \Bl_1Z\foura\tor$. The
exceptional divisor corresponds to the divisor of type E. The sage
script also computes the full stratification of $Z\foura$, which in
addition to the D, DD and E strata has strata of types G, DG and
EG. (Again, this is easy to do by hands.)

\smallskip\emph{Step 2.} We find all the possible fibers of the
restricted family $\cY_6|_{Z\foura} \to Z\foura$ by examining the
surfaces of Figure~\ref{fig-deg6} and picking those where it is
possible to have the points $P_{111} = R_1\cap G_1\cap B_1$ and
$P_{222} = R_2\cap G_2\cap B_2$ which must be smooth and distinct by
Lemma~\ref{lem:n-pts}. These possibilities are listed in
Figure~\ref{fig-deg4a}. We omit the generic case of $Y_6=\Sigma$.  

Let $\cY'\foura$ be the blowup of $\cY_6|_{Z\foura}$ along the two
disjoint sections corresponding to the points $P_{111}, P_{222}$.
It follows from the computations of Section~\ref{sec:evolution} 
that $\cL'$, the doubled log canonical divisor,
is relatively big and nef over $Z\foura$. Let $\cY'\to\cY'\can$ be
the contraction to the relative canonical model over $Z\foura$. It is
a family of KSBA stable pairs $(Y\foura,\sum_{i=0}^3\frac12(R_i+G_i+B_i))$.

Figure~\ref{fig-new-irr} shows the new components $\hash3_1(2)$ and
$\hash6_2(2)$; the latter turns out to be isomorphic to
$\hash5(2)$.
% The component $\hash2_1(1)\simeq \hash4(1)$ has already
% appeared before in Figure~\ref{fig-irr3}.
(Note: the DG case gives
the same surface as in the D case, and the EG case the same surface as
in the E case.) Over an open subset of $Z\foura$, the fiber is a del
Pezzo surface of degree $4$ which we denote $\hash0_2(4)$. On the G divisor, it
acquires an $A_1$ singularity; in this case it is a nodal del Pezzo.

Figure~\ref{fig-final4a} shows the fibers in the KSBA family
$\cY\foura \to Z\foura$, and Table~\ref{tab:deg4a} lists their types.

We observe that the fibers in the family $\cY\foura^{\rm can}\to
Z\foura$ of labeled pairs are all distinct with one exception: over
the E divisor, the exceptional divisor of the blowup $\rho_1\colon
\Bl_1\Sigma \to \Sigma$,  all fibers -- of type E -- are isomorphic. Let
$\oZ\foura = \Sigma$ and let $Z\foura \to \oZ\foura$ be the contraction
back to $\Sigma$. The family
$\cY\foura^{\rm can}\to Z\foura$
descends to a family 
$\overline{\cY}\foura^{\rm can}\to \oZ\foura$ in which every labeled
pair appears only once. We set $\oM\foura := \oZ\foura$.

\smallskip\emph{Step 3.} As in the previous section, we have to
divide $\oM\foura$ by the relabeling group $\Gamma\foura$. One part of
this group is obvious, it is 
\begin{displaymath}
  \Gamma\foura := \Stab_{\Gamma_6} \{ (R_1,G_1,B_1), \ (R_2,G_2,B_2) \}
  \simeq C_3\times S_2^2 = C_3 \times \la s_r\circ
s_g\circ s_b\ra \times \la\Cr\ra,
\end{displaymath}
using Notations~\ref{not:relabeling-group}, with $C_3$ cyclically
permuting the colors. But the full relabeling group is twice as big:
$\wt\Gamma\foura = \Gamma\foura\rtimes S_2$. One choice for $S_2$ is
$\la \sigma\ra$ with
\begin{displaymath}
  \sigma = (R_0R_1)(R_3R_2)(G_0B_1)(G_3B_2)(B_0G_1)(B_3G_2),
\end{displaymath}
which swaps the two groups $\{R_0,R_3,G_0,G_3,B_0,B_3\}$
$\{R_1,R_2,G_1,G_2,B_1,B_2\}$.

The automorphism group of a generic degree~$4$ del Pezzo surface is
$A \simeq S_2^4$. A direct computation shows that
\begin{math}
  A\foura := A\cap \wt\Gamma\foura = \operatorname{Center}(\wt\Gamma\foura) = S_2,
\end{math}
generated by the involution
\begin{displaymath}
  s_r \circ s_g\circ s_b\circ \Cr  = 
   (R_0R_3)(G_0G_3)(B_0B_3)(R_1R_2)(G_1G_2)(B_1B_2).
\end{displaymath}
We note that $A\foura\subset\Gamma\foura$, so $\sigma$ does not define
an automorphism of a generic surface $Y\foura$ in our family. Thus,
$\sigma$ defines an alternative choice for a coordinate system in
$M\foura$.  There are exactly four $(-1)$ curves that are not branch
divisors, which we can denote $E_{000}, E_{111}, E_{222},
E_{333}$. Contracting $E_{111}, E_{222}$ gives a morphism
$Y_3\to \Sigma$ leading to $\{R_0,R_3,G_0,G_3,B_0,B_3\}$ being the set
of external curves, and contracting $E_{000}, E_{333}$ gives a
morphism $Y_3\to \Sigma$ to a different Cremona surface, with the
external curves $\{R_1,R_2,G_1,G_2,B_1,B_2\}$.

These two presentations of $Y_4$ as $\Bl_2\Sigma$ are equivalent. A
direct computation shows that the coordinate systems in the two
presentations are related by
\begin{displaymath}
  \frac{r_0}{r_3} = \frac{g_2}{g_1}, \quad
  \frac{b_0}{b_3} = \frac{r_2}{r_1}, \quad
  \frac{g_0}{g_3} = \frac{b_2}{b_1}.
\end{displaymath}
One can check that these coordinate systems extend to the
identification of the compactifications $\oM\foura$ as well.

Thus, the moduli stack of labeled pairs modulo relabeling is
$[\oM\foura : \Gamma\foura]$. The subgroup $A\foura\simeq S_2$ acts
trivially on $T_4$ and thus also on $\oM\foura$, and the quotient
group $\Gamma\foura / A\foura = C_3\times S_2$ acts faithfully.
As in the previous section, we obtain a $\bZ_2^2$-gerbe over
$[\oM\foura : \Gamma\foura]$
whose coarse moduli space $\oM\foura / 
(C_3\times S_2)$ is the normalization of the closure of
$\oM\ubur\foura$ in $\oM\slc$. 

It is interesting to note that for the type E surface of
Figure~\ref{fig-final4a} the group of automorphisms preserving a
partition into three groups is $\Gamma\foura$. This is 
consistent with the above description of a moduli stack, since
$1 \in \bC^* \subset\oM\foura$ is fixed by $\Gamma\foura$.

Finally, we enumerate the boundary strata. There is one type D
divisor. The divisor of type E was contracted to a point, and the
surfaces over the G divisor are canonical, so are not counted as part
of the boundary.  This concludes the proof of
Theorem~\ref{thm-intro:4a}.

\section{Degree $4$ nodal}
\label{sec:deg4b} 

Here, we fix the closed subset
\begin{displaymath}
  Z\fourb := H_{111} \cap H_{122} \subset \oM_6,
\end{displaymath}
a projective variety that is an intersection of two type H divisors. 

\smallskip\emph{Step 1.} The image of $Z\fourb$ under the morphism
$\oM_6\to \oM_6\tor$ is the toric variety $Z\fourb\tor$ which is the
closure in $\oM_6\tor$ of the subtorus $T\fourb\subset T_6$,
\begin{displaymath}
  T\fourb = \{r_1g_1b_1 = r_1g_2b_2 = 1\}
  = \Spec \bC\left[ \frac{r_1}{r_2}, \frac{g_1}{g_2}, \frac{b_1}{b_2}\right] /
  \left( \frac{g_1}{g_2}\cdot \frac{b_1}{b_2} - 1\right).
\end{displaymath}
Its cocharacter lattice is
\begin{displaymath}
  N\fourb = \{\delta+\bar r+\bar g+\bar b = \delta+\bar r-\bar g-\bar b=0\}
  = \{ \delta + \bar r = \bar g + \bar b = 0\} \subset N_6.
\end{displaymath}
We computed the fan $\fF\fourb=\fF_6\cap N\fourb$ of $Z\fourb\tor$
using the sage script attached to the version of this paper on
arXiv. (Again, the lattice $N\fourb$ is small enough to do it by
hands.) The result is given in Lemma~\ref{lem:F4b}.

\begin{lemma}\label{lem:F4b}
  One has the following:
  \begin{enumerate}
  \item The fan $\fF\fourb$ is a nonsingular fan with $8$ rays, two of
    type B, four of type C and two of type D, with the integral generators
    \begin{displaymath}
      B:\ \pm(1,-1,0,0), \quad
      C:\ \pm(1,-1,1,-1),\ \pm(1,-1,-1,1), \quad
      D:\ \pm(0,0,1,-1).
    \end{displaymath}
    It also has $8$ two-dimensional cones, four each of types BC and CD.
  \item $Z\fourb\tor$ is a nonsingular toric variety isomorphic to the
    blowup of $\bP^1\times\bP^1$ at the four torus-fixed points.
  \end{enumerate}
\end{lemma}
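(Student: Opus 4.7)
The plan is to compute the intersection fan $\fF\fourb = \fF_6 \cap N\fourb$ by directly enumerating rays and then recognizing the combinatorics. First I would go through the four $\Gamma_6$-orbits of rays of $\fF_6$ (generated by the type A, B, C, D vectors of Section~\ref{sec:toric-setup-deg6}) and select those lying in $N\fourb$, i.e.\ satisfying $\delta+\bar r=0$ and $\bar g+\bar b=0$. The type A orbit $\{\pm(2,0,0,0)\}$ is immediately excluded. Among the $12$ rays of the B-orbit only $\pm(1,-1,0,0)$ survive; among the $16$ rays of the C-orbit only $\pm(1,-1,1,-1)$ and $\pm(1,-1,-1,1)$; among the $12$ rays of the D-orbit only $\pm(0,0,1,-1)$. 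This gives exactly the eight rays listed in (1), with the stated types.

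Next I would fix an isomorphism $N\fourb\cong\bZ^2$, for instance via $(a,b)\mapsto(-a,a,b,-b)$. In these coordinates the eight rays become the compass directions $(\pm 1,0)$, $(0,\pm 1)$, $(\pm 1,\pm 1)$. The unique complete fan on these rays in $\bR^2$ has eight two-dimensional cones, each joining a pair of consecutive rays, and any two consecutive rays form a $\bZ$-basis of $N\fourb$, so the fan is nonsingular. This is the well-known toric fan for the blowup of $\bP^1\times\bP^1$ at its four torus-fixed points: the axial rays $(\pm 1,0),(0,\pm 1)$ give the fan of $\bP^1\times\bP^1$, while the four diagonal rays $(\pm 1,\pm 1)$ produce the exceptional curves. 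Four of the eight $2$-cones are of type BC (a B ray together with an adjacent C ray) and four are of type CD, matching (1).

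To close the argument I would verify that this combinatorial compass fan really coincides with $\fF\fourb$. It suffices to check, for each of the eight $2$-cones, that the two corresponding generators in $N_6$ lie in a common $2$-cone of $\fF_6$, necessarily of type BC or CD; this is a direct lookup against the list of cones of $\fF_6$ recorded in \cite{alexeev2024explicit-compactifications}. The same check simultaneously rules out spurious rays that might a priori arise from higher-dimensional cones of $\fF_6$ meeting the $2$-plane $N\fourb_{\bR}$ only along a ray in their interior, because the eight compass cones already tile $N\fourb_{\bR}$ and each is visibly the intersection of an existing BC or CD cone of $\fF_6$ with $N\fourb_{\bR}$. The main step requiring care is precisely this last verification against $\fF_6$; however, given the explicit description of that fan the computation is entirely combinatorial, and the sage script attached to the paper performs it automatically, yielding both (1) and the isomorphism $Z\fourb\tor\simeq\Bl_4(\bP^1\times\bP^1)$ of (2).
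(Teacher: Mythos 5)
Your proposal is correct and follows essentially the same route as the paper, which simply states that $\fF\fourb=\fF_6\cap N\fourb$ was computed with the attached sage script (noting the lattice is small enough to do by hand) and records the result in the lemma. Your write-up is in fact the by-hand version of that computation, with the welcome extra care of checking that no spurious rays arise from higher-dimensional cones of $\fF_6$ meeting $N\fourb\otimes\bR$ only in their interiors.
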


The first blowup $\rho1{}_*\inv Z\fourb\tor$ is the blowup of this toric
variety $Z\fourb\tor$ at the origin. 
Of the six curves of Equation~\eqref{eq:six-curves}, there is one
curve that lies in $T\fourb$: 
\begin{equation}\label{eq:F-curve}
  \{ (1,x,1,1,1,1)\in T_\cY/ T_\Sigma,\ x\in\bC^*\} =
  \{(1,x\inv,1,1) \in T_6, x\in\bC^* \}
\end{equation}
corresponding to the degeneration of type F where in the non-stable
limit the curves $R_1,G_1,G_2,B_1,B_2$ pass through a single point
(Section~\ref{sec:degenerations-deg6}(4)). In $\oM_6$ it is replaced
by a KSBA stable pair. The other five curves intersect $T\fourb$
transversally at the origin. It follows that
$Z\fourb = \rho_*\inv Z\fourb\tor$, the blowup of the toric variety
$Z\fourb\tor$ at the origin, and that it contains a single divisor
$D_F\simeq\bP^1$, the closure of the curve~\eqref{eq:F-curve}.

\smallskip\emph{Step 2.} We find all the possible fibers of the
restricted family $\cY_6|_{Z\fourb} \to Z\fourb$ by examining the
surfaces of Figure~\ref{fig-deg6} and picking those where it is
possible to have the points $P_{111} = R_1\cap G_1\cap B_1$ and
$P_{122} = R_1\cap G_2\cap B_2$ which must be smooth and distinct by
Lemma~\ref{lem:n-pts}. These possibilities are listed in
Figure~\ref{fig-deg4a}. We omit the generic case of $Y_6=\Sigma$. We
do not list the B, G, H divisors which do not change the underlying
surface.

Let $\cY'\fourb$ be the blowup of $\cY_6|_{Z\fourb}$ along the two
disjoint sections corresponding to the points $P_{111}, P_{122}$. One
easily checks that $\cL'$, the doubled relative log canonical divisor,
is big and nef on all fibers over $Z\fourb$ \emph{except} for those in
the divisor $D_F$ of type F defined above. In this latter case the
surface is of the blowup of the surface of type $\hash8(1)\, [\bP^2]$
and has the volume $L'{}^2 = 1-2=-1$, so $L'$ is not big.
We noted this special case at the end of Section~\ref{sec:evolution}.

We correct the family $\cY'\fourb\to Z\fourb$ by making a flip
$\cY\fourb'\ratmap\cY\fourb''$. Luckily, it is of the easiest possible
type: it is the usual Atiyah flop (for $K_{\cY'\fourb}$) relative over
$Z\fourb$. The curves $C$ in the fibers over $D_F$ with
$K_{\cY'\fourb} \cdot C =-1$ sweep a codimension-$2$ subset $\cC$
lying in the smooth locus of $\cY'\fourb$. The map $\cC\to D_F$ is a
$\bP^1$-fibration. We blow it up to produce a
$\bP^1\times\bP^1$-fibration and contract it in the other direction to
produce the variety $\cY''\fourb$ containing $\cC^+$, which is another
$\bP^1$-fibration over $D_F$. The effect on the fibers is the
contraction of the curve $C$ followed by blowing up a point on the
neighboring surface producing a curve $C^+$. To a reader familiar with
degenerations of K3 surfaces, this will look exactly like 
a type II modification of Kulikov models.

On the new family $\cY''\fourb\to Z\fourb$ the relative canonical
class is nef, following the computation in
Section~\ref{sec:evolution}. Its multiple gives the contraction
$\cY''\fourb \to \cY_{4b}''{}\can$ to the relative canonical model, a
family of KSBA-stable pairs. The resulting KSBA stable pairs are
pictured in Figure~\ref{fig-final4b} and their irreducible components
are listed in Table~\ref{tab:deg4b}. Over an open subset of $Z\fourb$
the fibers are degree~$4$ del Pezzo surfaces with two
$A_1$-singularities. Over the F divisor they acquire an $A_2$
singularity.

In the family $\cY_{4b}''{}\can \to Z\fourb$ 
the fibers, labeled stable pairs $(Y\fourb, \sum_{i=0}^3\frac12
(R_i+G_i+B_i))$ are all pairwise non-isomorphic. We set $\oM\fourb :=
Z\fourb$. 

\smallskip\emph{Step 3.} In this case, the relabeling group is
\begin{displaymath}
  \Gamma\fourb := \Stab_{\Gamma_6} \{ (R_1,G_1,B_1), \ (R_1,G_2,B_2) \}
  \simeq S_2\times S_2 =
  \la s_g\circ s_b\ra \times \la\Cr\ra,
\end{displaymath}
It acts faithfully on $T\fourb$ and thus also on $\oM\fourb$. 

Arguing as in the previous sections, we obtain a $\bZ_2^2$-gerbe over
the quotient stack $[Z\fourb : \Gamma\fourb]$ whose coarse moduli
space $Z\fourb / S_2^2$ has a finite map to the closure of
$M\ubur\fourb$ in $\oM\slc$. Thus, $\oM\ubur\fourb = Z\fourb/S_2^2$.

Finally, we enumerate the boundary divisors. There are one of each
types B, C, D, E, G. The surfaces over the F divisor are canonical, so
they are not counted as part of the boundary.  This completes the
proof of Theorem~\ref{thm-intro:4b}.

\section{Degree $5$}
\label{sec:deg5} 

Here, we fix the closed subset
\begin{math}
  Z_5 := H_{111} \subset \oM_6,
\end{math}
a type H divisor.

\smallskip\emph{Step 1.} The image of $Z_5$ under the morphism
$\oM_6\to \oM_6\tor$ is the toric variety $Z_5\tor$ which is the
closure in $\oM_6\tor$ of the subtorus $T_5\subset T_6$,
\begin{displaymath}
  T_5 = \{r_1g_1b_1 \}
  = \Spec \bC\left[ \frac{r_1}{r_2}, \frac{g_1}{g_2}, \frac{b_1}{b_2}\right].
\end{displaymath}
Its cocharacter lattice is
\begin{displaymath}
  N_5 = \{\delta+\bar r+\bar g+\bar b \} \subset N_6.
\end{displaymath}
We computed the fan $\fF_5=\fF_6\cap N_5$ of $Z_5\tor$
using the sage script attached to the version of this paper on
arXiv. The result is given in Lemma~\ref{lem:F5} below. But first, we
introduce some notations.

\begin{notations}\label{not:F5}
  Let us introduce several rays of types B, C, D in $N_5$ with the
  following integral generators
  \begin{eqnarray*}
    &&\text{C:}\quad e_1=(1,1,-1,-1)\quad e_2=(1,-1,1,-1)\quad e_3=(1,-1,-1,1)\\
    &&\text{B:}\quad \frac{e_1+e_2}{2} = (1,0,0,-1) \qquad
    \text{D:}\quad \frac{e_1-e_2}{2} = (0,1,-1,0)
  \end{eqnarray*}
  We also introduce four types of three-dimensional cones in $N_6$:
  \begin{eqnarray*}
    &&\text{BBB}\quad
       \la \frac{e_1+e_2}{2}, \frac{e_2+e_3}{2}, \frac{e_3+e_1}{2} \ra\\
    &&\text{BBC}\quad
       \la e_1, \frac{e_1+e_2}{2}, \frac{e_1+e_3}{2} \ra\\
    &&\text{BCD} \quad
       \la e_1, \frac{e_1+e_2}{2}, \frac{e_1-e_3}{2} \ra\\
    &&\text{BDD}\quad
       \la \frac{e_1+e_2}{2}, \frac{e_1-e_3}{2}, \frac{e_2-e_3}{2} \ra\\
    &&\text{CDD}\quad
       \la e_1, \frac{e_1-e_2}{2}, \frac{e_1-e_3}{2} \ra\\
  \end{eqnarray*}
  Also, let $\Gamma_5 \subset \Gamma_6$ be the subgroup
  \begin{displaymath}
    \Gamma_5 := \Stab_{\Gamma_6} \{ (R_1,G_1,B_1) \}
    \simeq C_3\times S_2 = C_3 \times \la\Cr\ra.
  \end{displaymath}
  It acts on $N_4$ (the sublattice of $\bZ^4$ with even sums of
  coordinates) by permuting the last three coordinates and sending $v
  \to -v$. It is also the stabilizer of $N_5$ in $\Gamma_6$.
\end{notations}

\begin{lemma}\label{lem:F5}
  The fan $\fF_5$ is a nonsingular fan with $18$ rays, comprising the
  $\Gamma_5$-orbits of the B, C, D rays defined above ($6$ of each
  kind), and with $32$ three-dimensional cones comprising the
  $\Gamma_5$-orbits of the cones defined above: BBB($2$), BBC($6$),
  BCD($12$), BDD($6$), CDD($6$).  Thus, $Z_5\tor$ is a nonsingular toric
  variety.
\end{lemma}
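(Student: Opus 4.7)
The plan is to compute $\fF_5 = \fF_6\cap N_5$ directly and then to check nonsingularity cone by cone, using the action of $\Gamma_5$ on $N_5$ to reduce both enumerations to a few orbit representatives.

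For the rays: a ray of $\fF_5$ is either a ray of $\fF_6$ already contained in $N_5$, or the one-dimensional intersection of a higher-dimensional cone of $\fF_6$ with the hyperplane $N_5\otimes\bR = \{\delta+\bar r+\bar g+\bar b=0\}$. The four $\Gamma_6$-orbits of rays in $\fF_6$ are generated by $(2,0,0,0)$, $(1,1,0,0)$, $(1,1,1,1)$, $(0,1,0,1)$, and one checks that no $\Gamma_6$-translate of the A ray has zero coordinate sum, whereas the B, C, D orbits each contain exactly six translates lying in $N_5$, namely the six rays $\pm(e_i+e_j)/2$, $\pm e_i$, $\pm(e_i-e_j)/2$ with $\{i,j\}\subset\{1,2,3\}$, $i\neq j$, of Notations~\ref{not:F5}. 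This accounts for 18 rays. The absence of additional rays arising from transverse intersections of $2$-dim cones of $\fF_6$ with $N_5\otimes\bR$ will follow once the 32 candidate $3$-dim cones below are shown to cover $N_5\otimes\bR$.

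For the three-dimensional cones: under $\Gamma_5 = C_3\times\la\Cr\ra$, which permutes $e_1, e_2, e_3$ and acts on every vector by $\pm 1$, the five cone shapes BBB, BBC, BCD, BDD, CDD have orbits of sizes $2, 6, 12, 6, 6$, summing to $32$. For each representative listed in Notations~\ref{not:F5}, one locates the cone of $\fF_6$ whose trace on $N_5\otimes\bR$ produces it, and then verifies nonsingularity by computing the $3\times 3$ determinant of the generators in a fixed basis of $N_5$; for instance one may take the three B-rays $f_k = (e_i+e_j)/2$ for $\{i,j,k\}=\{1,2,3\}$. In each case the determinant is $\pm 1$, so the corresponding affine chart of $Z_5\tor$ is smooth. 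Completeness of the candidate fan $\fF_5$ follows because $\fF_6$ is itself complete, hence the collection of restricted cones tiles $N_5\otimes\bR$.

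The main obstacle is the combinatorial bookkeeping: $\fF_6$ has 42 rays and a correspondingly rich structure of higher-dimensional cones, so even with the reduction afforded by $\Gamma_5$ one must systematically intersect each $\Gamma_5$-orbit of maximal cones of $\fF_6$ with $N_5\otimes\bR$ to confirm that no ray or cone has been missed. This is precisely what the sage script attached to the arXiv version carries out; a by-hand check is tractable but unilluminating, which is why the authors defer to the script.
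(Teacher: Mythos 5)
Your proposal is correct and follows essentially the same route as the paper: the authors likewise obtain $\fF_5=\fF_6\cap N_5$ by direct computation (delegating the exhaustive bookkeeping over the $42$ rays and maximal cones of $\fF_6$ to the attached sage script), and your orbit counts for the rays and the five cone types, as well as the unimodularity checks, match theirs. The only content in your write-up beyond the paper's is the explicit explanation of why no extra rays can arise from transverse slices of higher-dimensional cones, which is a sound and welcome addition.
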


The first blowup $\rho_1{}_*\inv Z_5\tor$ is the blowup of this toric
variety $Z_5\tor$ at the origin. 
Of the six curves of Equation~\eqref{eq:six-curves}, three lie
in $T_5$:
\begin{equation}\label{eq:3F-curves}
  \{(1,x\inv,1,1),\  (1,1,x\inv,1),\  (1,1,1,x\inv), \quad x\in\bC^* \}
\end{equation}
and the other three intersect $T_5$ transversally. Thus, the second
blowup $\rho_2\colon Z_5 \to \rho_1{}_*\inv Z_5\tor$ is a blowup at
three disjoint $\bP^1$'s producing three type F divisors.

\smallskip\emph{Step 2.} We find all the possible fibers of the
restricted family $\cY_6|_{Z_5} \to Z_5$ by examining the surfaces of
Figure~\ref{fig-deg6} and picking those where it is possible to have
the point $P_{111} = R_1\cap G_1\cap B_1$.  which must be smooth by
Lemma~\ref{lem:n-pts}. These possibilities are listed in
Figure~\ref{fig-deg5}. We omit the generic case of $Y_6=\Sigma$. We
do not list the B, G, H divisors which do not change the underlying
surface.

Let $\cY'_5$ be the blowup of $\cY_6|_{Z_5}$ along the section
corresponding to the points $P_{111}$. From the computations of
Section~\ref{sec:evolution}, we conclude that $\cL'$, the double
of the relative log canonical divisor, is nef on all fibers. It is
also big on all fibers except those lying in the three type F
divisors. In that case, there is an irreducible component of the fiber
isomorphic to the blowup of $\hash8(1)\, [\bP^2]$ which has volume
$1-1=0$. Obviously $\Bl_1\bP^2 = \bF_1$. The contraction
$\cY'_5\to \cY_5\can$ given by the linear system $|m\cL'|$ for $m\gg0$
contracts this $\bF_1$ to $\bP^1$ along the $\bP^1$-fibration.

Over an open subset of $Z_5$ the fibers are del Pezzo surfaces of
degree $5$, which we denote by $0_1(5)$. 
All other irreducible components of the stable surfaces $Y_5$ have already
appeared before, in degrees $3$ and $4$.

The fibers, labeled pairs, in the family $\cY_5\can\to Z_5$ are all
distinct with one exception: for points $s\in \bP^1\subset Z_5$ with the same
image $\rho_2(s) \in \rho_1{}_*\inv Z_5\tor$ the fibers are
isomorphic. Let $Z_5 \to \oZ_5 := \rho_1{}_*\inv Z_5\tor$ be the
undoing of the second blowup. The family $\cY_5\can\to Z_5$ descends
to a family $\overline{\cY}_5\can\to \oZ_5$ in which all fibers are
distinct.

Figure~\ref{fig-final5} shows the KSBA pairs appearing in the above families,
and Table~\ref{tab:deg5} lists their irreducible components.

\smallskip\emph{Step 3.} In this case, the relabeling group is the
group $\Gamma_5 = C_3\times S_2$ defined in Notations~\ref{not:F5}. It
acts faithfully on $T_5$ and $\oZ_5$.

Arguing as in the previous sections, we obtain a $\bZ_2^2$-gerbe over
the quotient stack $[\oZ_5 : \Gamma_5]$ whose coarse moduli space
$Z_5 / \Gamma_5$ has a finite map to the closure of $M\ubur_5$ in
$\oM\slc$. Thus, $\oM\ubur_5 = \oZ_5/\Gamma_5$.  Finally, we enumerate
the boundary divisors. There is one each of types B, C, D, G, E, and
two of type H. We do not consider the F divisor to be part of the
boundary, since the surfaces there are canonical. This completes the
proof of Theorem~\ref{thm-intro:5}.

%\newpage
\bibliographystyle{amsalpha} 
%\bibliography{va}

\def\cprime{$'$} 
\providecommand{\bysame}{\leavevmode\hbox to3em{\hrulefill}\thinspace}
\providecommand{\MR}{\relax\ifhmode\unskip\space\fi MR }
% \MRhref is called by the amsart/book/proc definition of \MR.
\providecommand{\MRhref}[2]{%
  \href{http://www.ams.org/mathscinet-getitem?mr=#1}{#2}
}
\providecommand{\href}[2]{#2}

\newpage

\appendix

\section{Figures}
\label{sec:figures}

%\FloatBarrier

${}$

\begin{figure}[H]
  \centering
  \includegraphics[width=364pt]{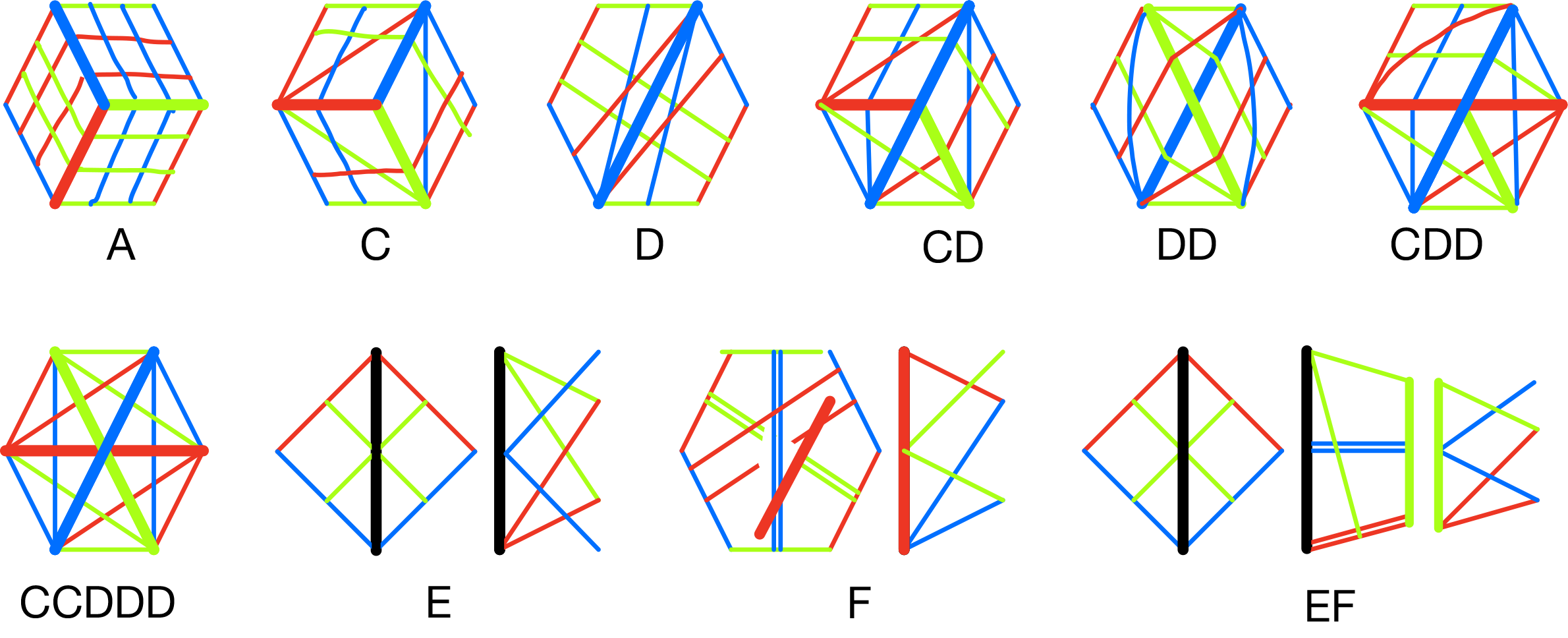} 
  \caption{Surfaces underlying the stable pairs \\$(Y_6,\sum_{i=0}^3\frac12(R_i+G_i+B_i))$ in $\oM_6$} 
  \label{fig-deg6}
\end{figure}

%\FloatBarrier
\begin{figure}[b]
  \centering 
  \includegraphics[width=254pt]{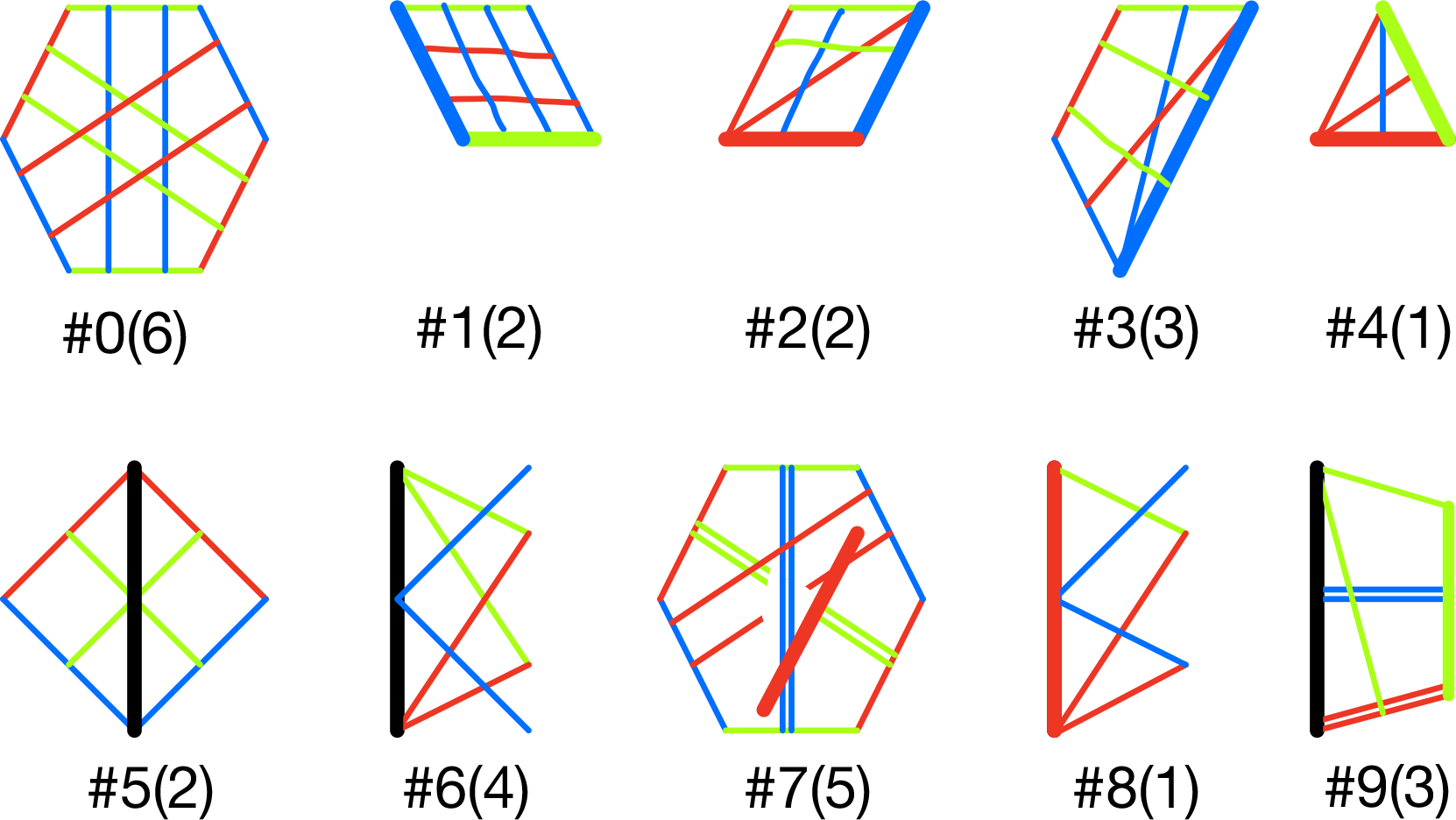} 
  \caption{Irreducible components of $Y_6$ appearing in $\oM_6$, with
    volumes. The underlying surfaces are: \\
    \protect\begin{displaymath}
      \protect\kern -70pt
      \protect\begin{array}[]{lllll} 
                \hash0(6)\ \Sigma 
                &\hash1(2)\,\bP^1\times\bP^1 
                &\hash2(2)\,\bP^1\times\bP^1 &\hash3(3)\, \bF_1
                &\hash4(1)\,\bP^2\\ 
                \hash5(2)\,\bP^1\times\bP^1 &\hash6(4)\,\bP^2 
                &\hash7(5)\,\Bl_1\Sigma &\hash8(1)\,\bP^2 &\hash9(3)\,\bF_1\\ 
              \protect\end{array} 
            \protect\end{displaymath}
        }
  \label{fig-irr6}
\end{figure}

\FloatBarrier
\begin{figure}[H]
  \centering 
  \includegraphics[width=346pt]{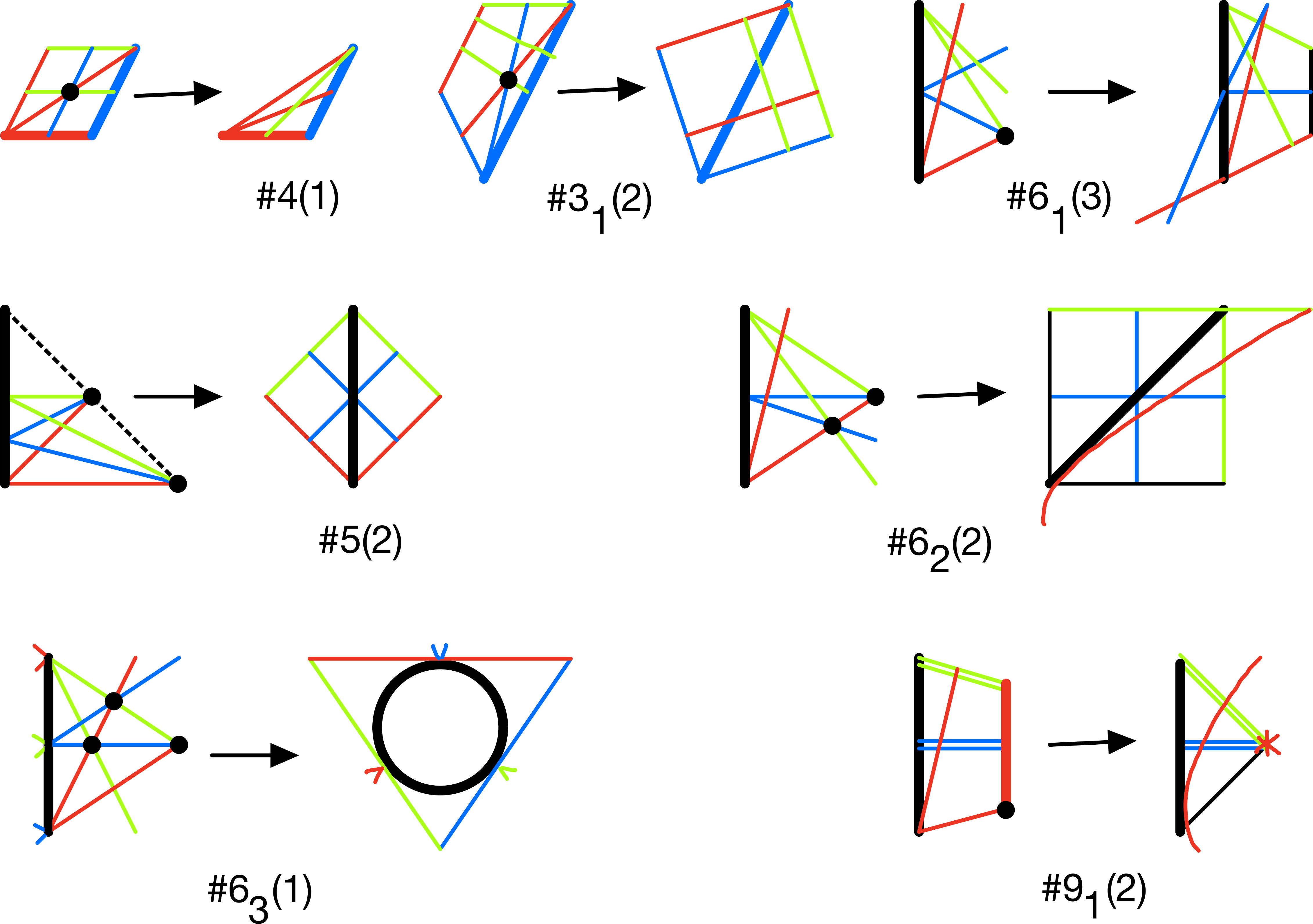}
  \caption{New irreducible components appearing after blowups and
    contractions by $|mL'|$. The underlying surfaces are:\\
    \protect\begin{displaymath}
      \protect\kern -70pt
      \protect\begin{array}[]{lllll}
        \hash3_1(2)\ \bP^1\times\bP^1
        &\hash6_1(3)\ \bF_1
        &\hash6_2(2)\ \bP^1\times\bP^1
        &\hash6_3(1)\ \bP^2
        &\hash9_1(2)\ \bP(1,1,2)
              \protect\end{array}
          \protect\end{displaymath}
      }
  \label{fig-new-irr}
\end{figure}

\FloatBarrier\begin{figure}[H]
  \centering
    \includegraphics[width=233pt]{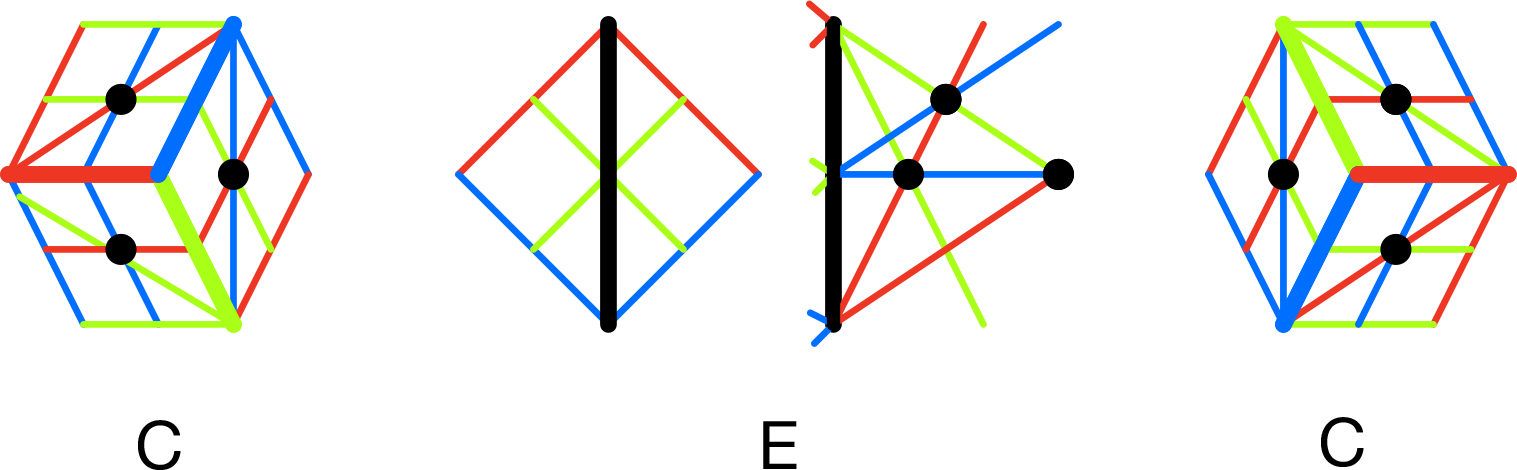}
    \caption{Surfaces in the family $\cY_6|_{Z_3} \to Z_3$}
    \label{fig-deg3}
\end{figure}

\FloatBarrier\begin{figure}[H]
  \centering
  \includegraphics[width=200pt]{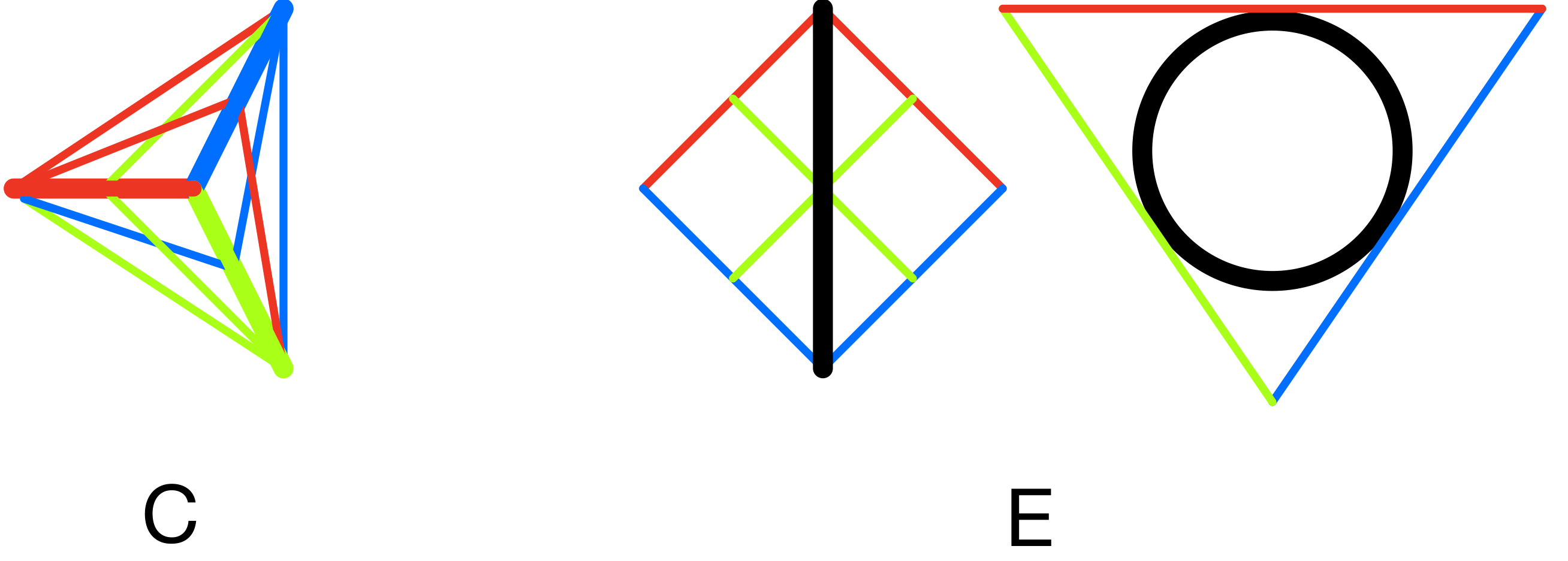} 
  \caption{KSBA stable surfaces appearing in $\oM_3$}
  \label{fig-final3}    
\end{figure}

\FloatBarrier\begin{figure}[H]
  \centering
  \includegraphics[width=221pt]{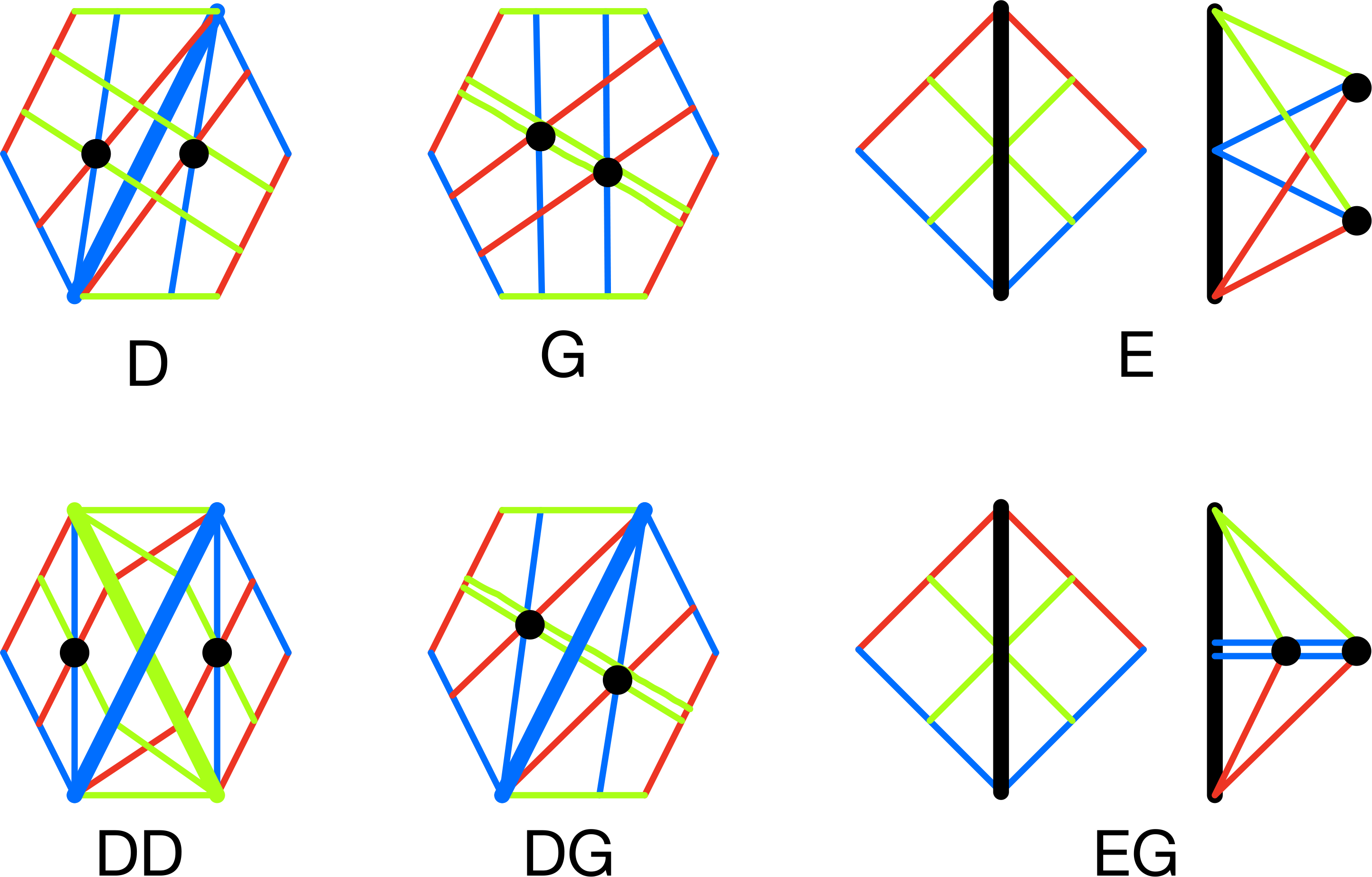}  
  \caption{Surfaces in the family $\cY_6|_{Z\foura} \to Z\foura$}
  \label{fig-deg4a}
\end{figure}

\FloatBarrier\begin{figure}[H]
  \centering 
  \includegraphics[width=290pt]{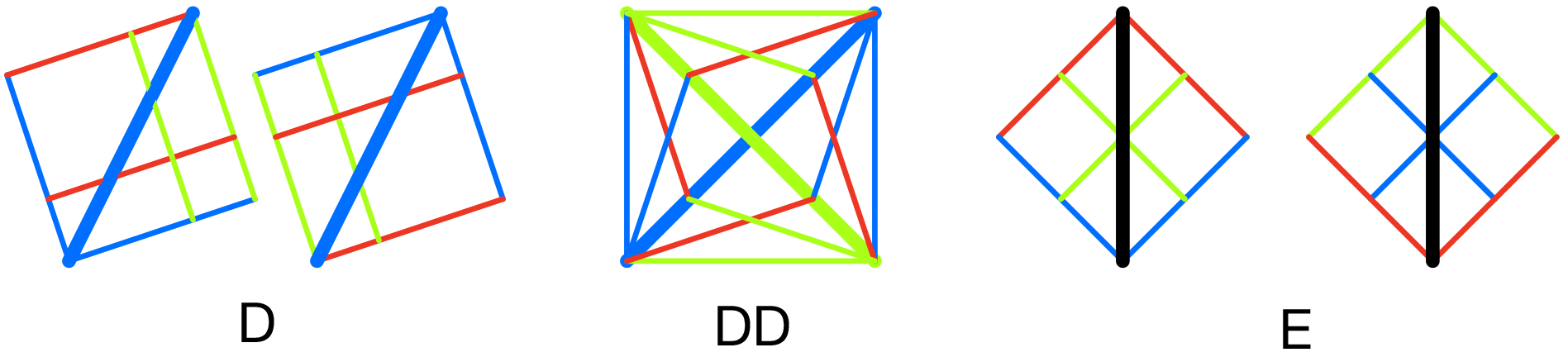}   
  \caption{KSBA stable surfaces appearing in $\oM\foura$}
  \label{fig-final4a}
\end{figure}

\FloatBarrier\begin{figure}[H]
  \centering
  \includegraphics[width=301pt]{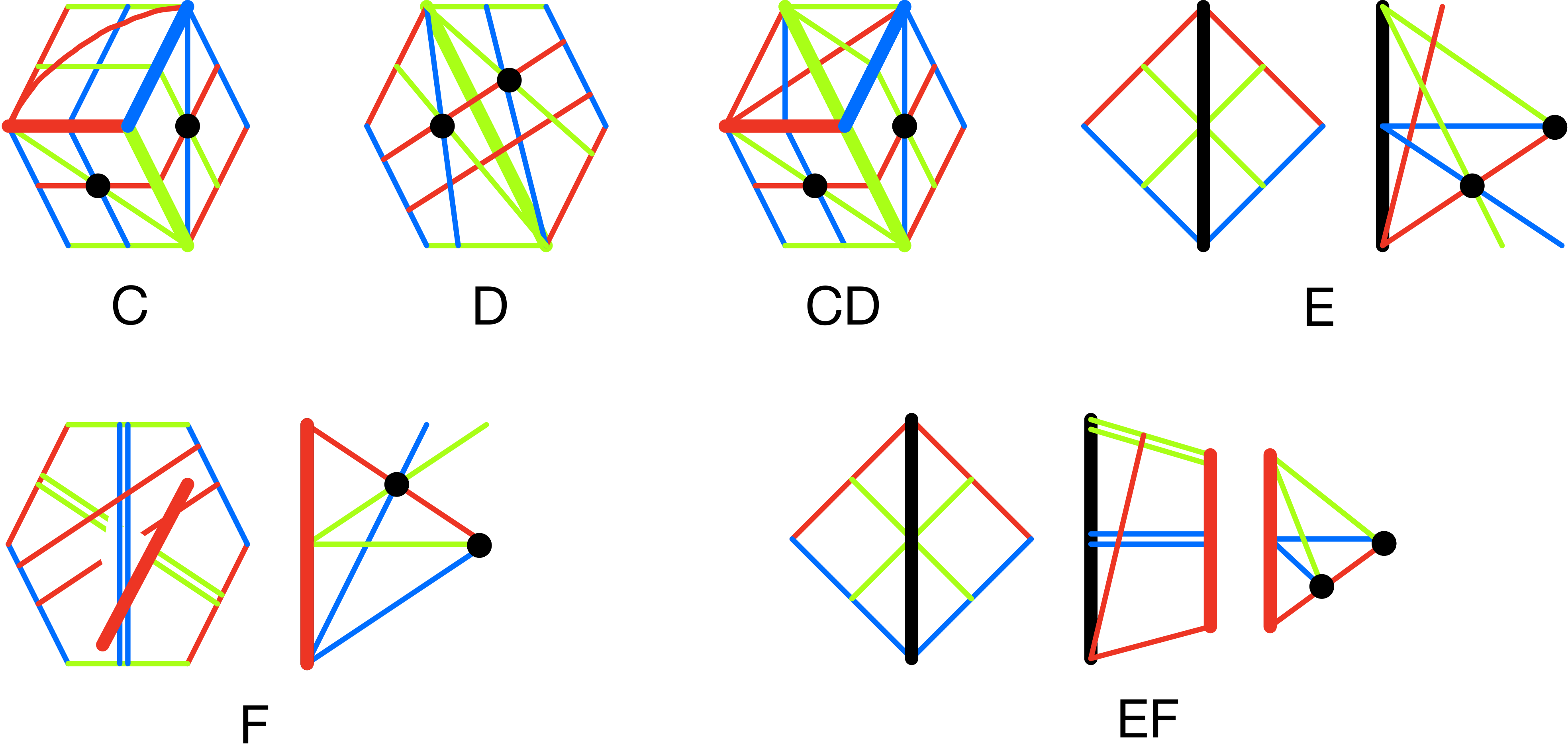}  
  \caption{Surfaces in the family $\cY_6|_{Z\fourb} \to Z\fourb$}
  \label{fig-deg4b}
\end{figure}
\FloatBarrier\begin{figure}[H]
  \centering 
  \includegraphics[width=244pt]{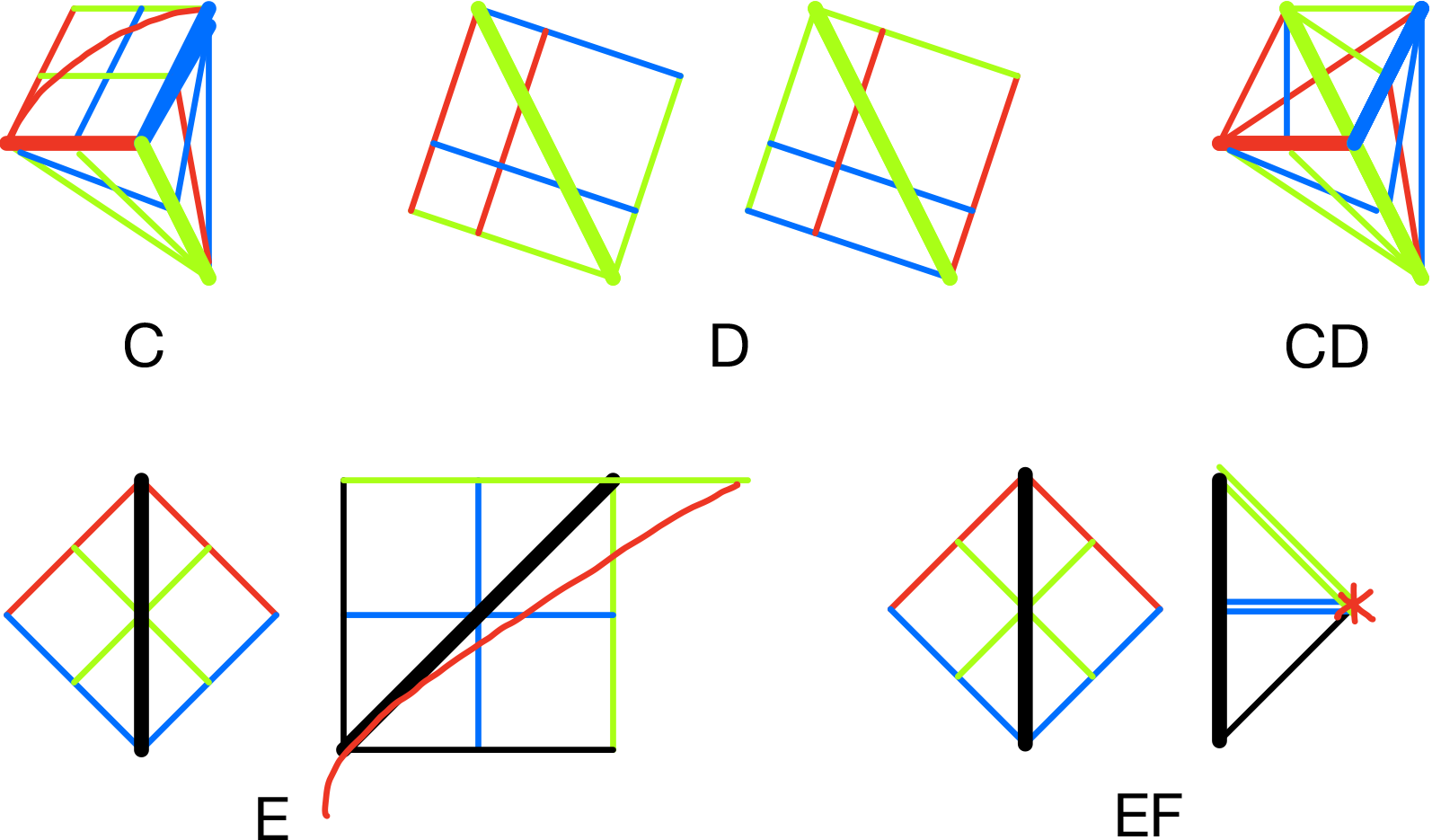} 
  \caption{KSBA stable surfaces appearing in $\oM\fourb$}
  \label{fig-final4b}
\end{figure}

\FloatBarrier\begin{figure}[H]
  \centering
  \includegraphics[width=357pt]{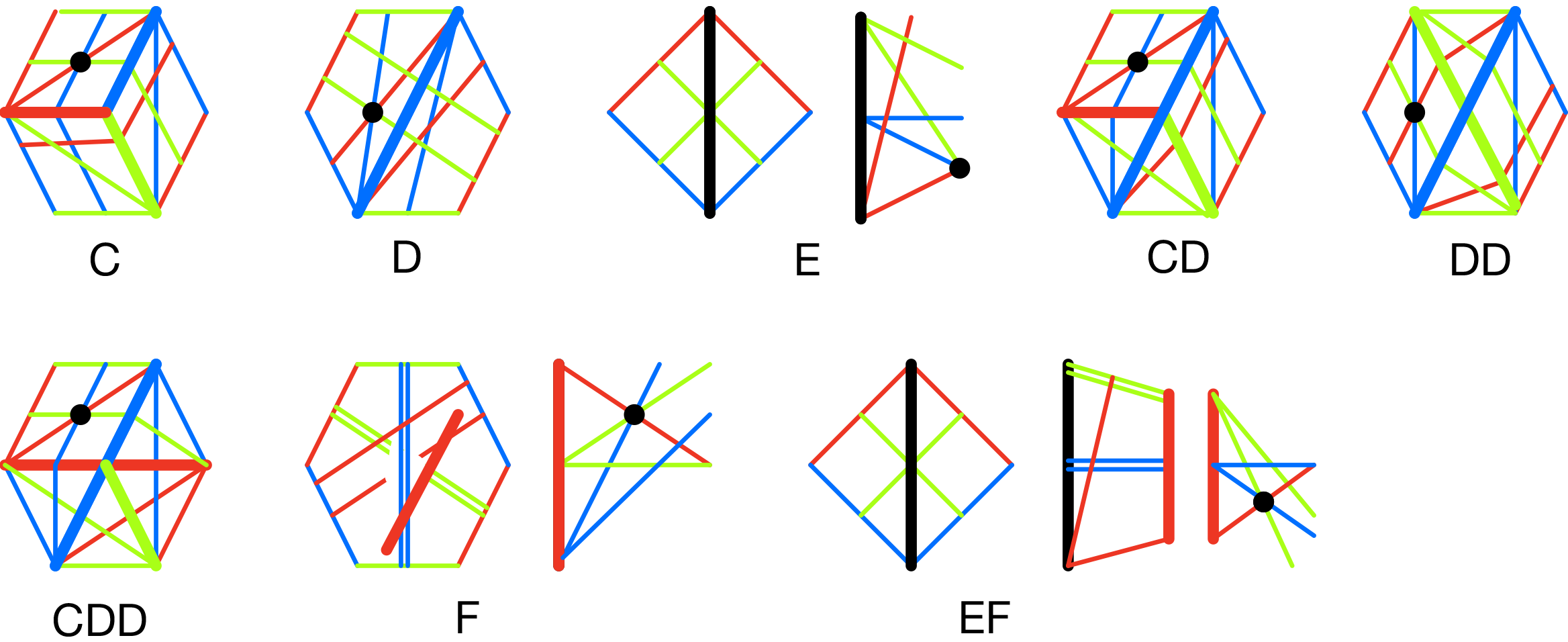}  
  \caption{Surfaces in the family $\cY_6|_{Z_5} \to Z_5$}
  \label{fig-deg5}
\end{figure}

\FloatBarrier\begin{figure}[H]
  \centering 
  \includegraphics[width=280pt]{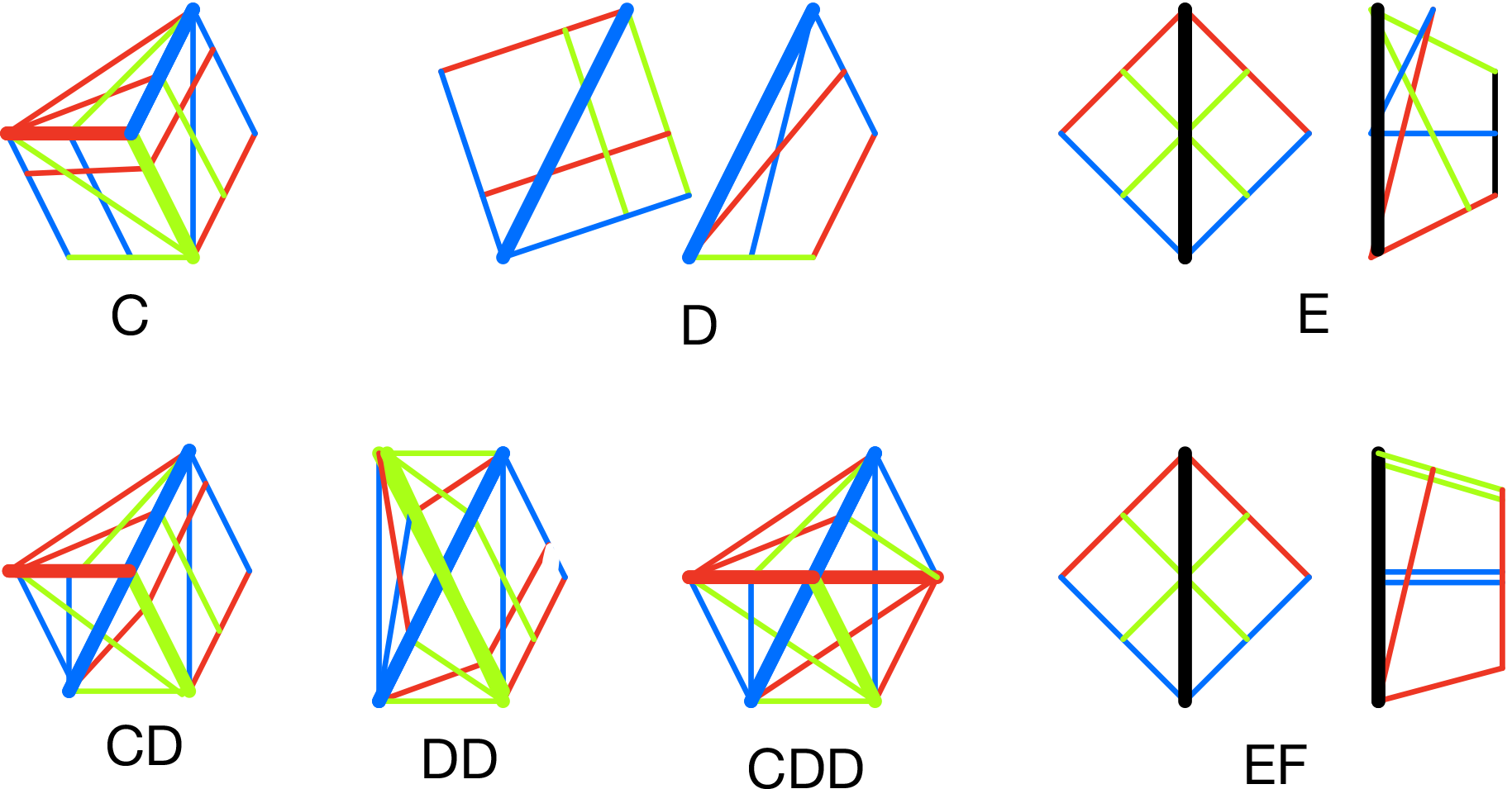}  
  \caption{KSBA stable surfaces appearing in $\oM_5$}
  \label{fig-final5}
\end{figure}

\FloatBarrier

\section{Tables}
\label{sec:tables}

\begin{table}[H] 
  \centering
  \begin{tabular}{ll}
    case &components with volumes\\
    \hline
    0 &$\hash 0(6)$\\
    A &$\hash 1(2)$, $\hash 1(2)$, $\hash 1(2)$\\
    C &$\hash 2(2)$, $\hash 2(2)$, $\hash 2(2)$\\
    D &$\hash 3(3)$, $\hash 3(3)$\\
    CD &$\hash 2(2)$, $\hash2(2)$, $\hash4(1)$, $\hash4(1)$\\
    DD &$\hash2(2)$, $\hash2(2)$, $\hash4(1)$, $\hash4(1)$\\
    CDD &$\hash2(2)$, $\hash4(1)$, $\hash4(1)$, $\hash4(1)$, $\hash4(1)$\\
    CCDDD &$\hash4(1))$, $\hash4(1)$, $\hash4(1)$, $\hash4(1)$,
            $\hash4(1)$, $\hash4(1)$\\
    E &$\hash 5(2)$, $\hash 6(4)$\\
    F &$\hash7(5)$, $\hash8(1)$\\ 
    EF &$\hash5(2)$, $\hash9(3)$, $\hash8(1)$\\
  \end{tabular}
  \caption{Surfaces underlying stable pairs in $\oM_6$, with volumes}
  \label{tab:deg6}
\end{table}

\begin{table}[H] 
  \centering
  \begin{tabular}{ll}
    case &components with volumes\\
    \hline
    0 &$\hash0_3(3)$\\
    C &$\hash4(1)$, $\hash4(1)$, $\hash4(1)$\\
    E &$\hash5(2)$, $\hash6_3(1)$
  \end{tabular}
  \caption{Surfaces underlying stable pairs in $\oM_{3}$}
  \label{tab:deg3}
\end{table}

\begin{table}[H] 
  \centering
  \begin{tabular}{ll}
    case& components with volumes\\
    \hline
    0, G &$\hash0_2(4)$\\
%    G &$\hash0'_2(4)$  \\
    D, DG &$\hash 3_1(2)$, $\hash 3_1(2)$\\
    E, EG &$\hash5(2)$, $\hash5(2)$\\
    DD &$\hash4(1)$, $\hash4(1)$, $\hash4(1)$, $\hash4(1)$
  \end{tabular}
  \caption{Surfaces underlying stable pairs in $\oM\foura$}
  \label{tab:deg4a} 
\end{table}

\begin{table}[H] 
  \centering
  \begin{tabular}{lllll}
    case &components with volumes\\
    \hline
    0, F &$0_2(4)$\\
    C &$\hash4(1)$, $\hash4(1)$, $\hash2(2)$\\
    D &$\hash3_1(2)$, $\hash3_1(2)$\\
    CD &$\hash4(1)$, $\hash4(1)$, $\hash4(1)$, $\hash4(1)$\\
    E &$\hash5(2)$, $\hash6_2(2)$\\
 %   F &$\hash0''_2(4)$\\
    EF &$\hash5(2)$, $\hash9_2(2)$
  \end{tabular}
  \caption{Surfaces underlying stable pairs in $\oM\fourb$}
  \label{tab:deg4b}
\end{table}

\begin{table}[H] 
  \centering
  \begin{tabular}{ll}
    case &components with volumes\\
    \hline
    0 &$0_1(5)$\\
    C &$\hash 4(1)$, $\hash 2(2)$, $\hash 2(2)$\\
    D &$\hash 3_1(2)$, $\hash 3(3)$\\
    E &$\hash 5(2)$, $\hash 6_1(3)$\\
    CD &$\hash4(1)(1)$, $\hash2(2)$, $\hash4(1)$, $\hash4(1)$\\
    DD &$\hash4(1)$, $\hash2(2)$, $\hash4(1)$, $\hash4(1)$\\
    CDD &$\hash4(1)$, $\hash4(1)$, $\hash4(1)$, $\hash4(1)$, $\hash4(1)$\\
    F &$\hash7(5)$\\
    EF &$\hash5(2)$, $\hash9(3)$
  \end{tabular}
  \caption{Surfaces underlying stable pairs in $\oM_5$}
  \label{tab:deg5}
\end{table}

\end{document}